\tikzset{individu/.style={draw,thick}}
\theoremstyle{plain}
\newtheorem{theorem}{Theorem}[section]
\newtheorem{corollary}[theorem]{Corollary}
\newtheorem{lemma}[theorem]{Lemma}
\theoremstyle{definition}
\theoremstyle{remark}
\newtheorem{remark}[theorem]{Remark}
\numberwithin{equation}{section}
\newcommand \listoftodos{\section*{Todo list} \@starttoc{tdo}}
\newcommand\l@todo[2]
\noindent \textit{#2}, \parbox{10cm}{#1}\par} \makeatother
\newcommand{\N}{\mathbb{N}}
\newcommand{\Z}{\mathbf{Z}}
\newcommand{\R}{\mathbb{R}}
\newcommand{\W}{\mathbf{W}}
\newcommand{\calD}{\mathcal{D}}
\newcommand{\calM}{\mathcal{M}}
\newcommand{\NN}{\mathbf{N}}
\renewcommand{\S}{\mathbf{S}}
\newcommand{\ind}[1]{\mathbf{1}_{\left\{#1\right\}}}
\newcommand{\indset}[1]{\mathbf{1}_{#1}}
\newcommand{\crochet}[1]{{\langle #1 \rangle}}
\renewcommand{\bar}[1]{\overline{#1}}
\renewcommand{\tilde}[1]{\widetilde{#1}}
\newcommand{\e}{\mathrm{e}}
\newcommand{\dd}{\mathrm{d}}
\newcommand{\egaldistr}{{\overset{(d)}{=}}}
\DeclareMathOperator{\E}{\mathbb{E}}
\renewcommand{\P}{\mathbb{P}}
\newcommand{\Mf}{\mathcal{M}_f}
\newcommand{\x}{\mathbf{x}}
\newcommand{\X}{\mathbf{X}}
\newcommand{\U}{\mathbf{U}}
\newcommand{\y}{\mathbf{y}}
\newcommand{\z}{\mathbf{z}}
\title{Scaling limits of branching random walks\\ and
branching stable processes}
\author{Jean Bertoin\thanks{Institute of Mathematics, University of Zurich, Switzerland.} \and Hairuo Yang  \thanks{Institute of Mathematics, University of Zurich, Switzerland.}}
\date{\today}
\begin{document}

\maketitle

\begin{abstract}  Branching-stable processes have recently appeared as counterparts of stable subordinators,
when addition of real variables is replaced by branching mechanism for point processes. 
Here, we are interested in their domains of attraction and describe explicit conditions for a branching random walk to converge
after a proper magnification to a branching-stable process. This contrasts with deep results that have been obtained during the last decade
 on the asymptotic behavior of branching random walks and which involve either shifting without rescaling, or demagnification. \end{abstract}

\noindent \emph{\textbf{Keywords:}} Branching random walk, scaling limit, branching stable processes.
\medskip

\noindent \emph{\textbf{AMS subject classifications: }60F17; 60J80}

\section{Introduction}
\label{sec:introduction}
We start by recalling the classical stable  limit theorem in the special case of nonnegative random variables, referring to the treatises \cite{GK} and \cite{IL} for the complete story.
Consider a random variable $Y\geq 0$ whose tail distribution $\bar F(y)=\P(Y>y)$ is regularly varying at infinity with index $-\beta$ for some $\beta\in(0,1)$, that is
$\lim_{y\to \infty} \bar F(ay)/\bar F(y) = a^{-\beta}$ for all $a>0$. Let also $Y_1, Y_2, \ldots$ denote a sequence of i.i.d. copies of $Y$.
Then for any sequence $(a_n)_{n\in \N}$ of positive real numbers such that $\lim_{n\to \infty} n\bar F(a_n)=1$ (as a consequence,  $(a_n)$ varies regularly with index $1/\beta$),  
the sequence $(Y_1+\ldots+Y_n)/a_n$  of rescaled partial sums converges in distribution as $n\to \infty$ to  a stable law  on $\R_+$ with exponent $\beta$. 
The purpose of this paper is to present an analog of this stable limit theorem in the setting of branching random walks. A rather surprising feature is that the counterpart of the index $\beta$, denoted here by $-\alpha$, can then  be any negative real number. 

The study of the asymptotic behavior of branching processes has attracted a lot of attention and efforts during many years. First, for light tailed displacements, let us merely single out  \cite{Big77, Big92} and \cite{BramA, BramB}  amongst the most important earlier contributions. More recently, \cite{Aidek} established a remarkable limit theorem in distribution for the minimum of a super-critical branching random walk. Then  \cite{ABBS} and \cite{ABK1, ABK2}  
showed that the point process formed by a branching Brownian motion seen from its left-most atom converges in distribution  to   a random counting measure that can be constructed from some marked  Poisson point process and is often called decorated  Poisson point process. Finally, \cite{Madaule} obtained the counterpart  of the Brownian results in the framework of branching random
walks. For heavy tailed displacements,  weak limit theorems have been established in \cite{Durr} for the location of the rightmost particle, and in \cite{BHR1, BHR2} for the whole point process. We shall recall  below some of these results, a bit informally for the sake of simplicity. 

Let us introduce beforehand some notation that we shall use throughout this text. 
For every $b>0$ and every  counting measure $m=\sum_j \delta_{x_j}$ on a vector space, we write $bm$ for the push-forward of $m$ by the dilation  with factor $b$, $x\mapsto bx$, that is 
$$bm=\sum_j \delta_{bx_j}. $$
In other words,  we shall think of a counting measure as a multiset of atoms repeated according to their multiplicities, and in this setting, multiplication by a scalar acts on the location of atoms rather than on the measure itself. 
We also write $\crochet{m, f}$ for the integral of a function $f$ with respect to a counting measure $m$, that is 
$$\crochet{m, f}=\sum_j f(x_j),$$ 
whenever this makes sense. In particular, the mass $m(B)$ of a set $B$ is written as 
$\crochet{m, \indset{B}}$, with $\indset{B}$ denoting the indicator function of $B$. 

Assume that $(\Z(n))_{n\geq 0}$ is a branching random walk on $\R$, where $\Z(n)$ is the point process induced by the locations of the particles at generation $n$; suppose also for simplicity that $\Z(0)=\delta_0$ is the Dirac point mass at $0$. We refer to the lecture notes \cite{Shi}  for the necessary background, terminology,  and of course, much more.
The fundamental assumption in  \cite{Madaule} is  that for the functions $\mathbf 1(x)=1$, $f(x)=\e^{-x}$, $g(x)=x\e^{-x}$ and $h(x)=x^2\e^{-x}$:
\begin{align}\label{E:fundhyp} 
& \E(\crochet{\Z(1),\mathbf 1})>1, \ \E(\crochet{\Z(1),h})<\infty,\nonumber \\
& 
 \E(\crochet{\Z(1),f })=1 \ \text{and } \E(\crochet{\Z(1),g})=0. \end{align}
This may look stringent; however in practice a simple linear map transforms many branching random walks into another branching random walk that satisfies \eqref{E:fundhyp}. Taking also for granted some further technical requirements, the point process obtained by  shifting the atoms of $\Z(n)$ by $-\frac{3}{2}\log n -\log D_{\infty}$, where $D_{\infty}$ denotes the terminal value of the so-called derivative martingale,  then converges in distribution as $n\to \infty$.  It is remarkable that this weak limit theorem involves shifting, but not rescaling. Moreover, the limiting point process can be described as a  decorated Poisson point process.

More recently,  \cite{BHR1} considered branching random walks obtained by superposing i.i.d. heavy tailed displacements to a supercritical Galton-Watson tree. Specifically, one supposes there that the first generation has the form 
$$\Z(1)=\sum_{j=1}^{N} \delta_{Y_j},$$ where $Y_1, \ldots$ is a sequence of i.i.d. real random variables and $N$ an independent integer valued random variable with $\E(N)=\mu>1$ and $\E(N \log N)<\infty$.  Assuming further that the tail distribution $\P(|Y_1|>x)$ is regularly varying at infinity with index $-\beta<0$ and a tail balanced condition, there is a sequence $(b_n)$ of positive real numbers that grows roughly like $\mu^{n/\beta}$ such that,  conditionally on non-extinction, $b_n^{-1}\Z(n)$ converges weakly to a so-called Cox cluster process. See Theorem 2.1 in \cite{BHR1} for a precise statement. The same authors extended their result  and replaced the assumption that the sequence $Y_1, \ldots$ is i.i.d. by a weaker condition involving regular variation in the sense of \cite{HL,  LRR}; see Theorem 2.6 in \cite{BHR2}.

We shall now present, again a bit informally, the main result of the present work. 
We consider henceforth a branching random walk $(\Z(n))_{n\geq 0}$ on the nonnegative half-line 
$\R_+=[0,\infty)$, and assume as before that $\Z(0)=\delta_0$. We suppose that $\Z(1)$ has a single atom at the origin a.s. and we write 
$$0< X_1\leq X_2 \leq \ldots \leq \infty$$
for the sequence of atoms of $\Z(1)$ on $(0,\infty)$, ranked in the non-decreasing order and repeated according to their multiplicities, with the convention that $X_j=\infty$ if and only if
$\Z(1)$ has less than $j$ atoms in $(0,\infty)$. In other words,
$$\Z(1)= \delta_0 + \sum_{j\geq 1} \delta_{X_j},$$
where we implicitly agree to discard atoms at $\infty$ in the series on the right-hand side. 
Strictly speaking, after each unit of time, every individual dies and simultaneously gives birth to children among those a single one
occupies the same location as its parent. If we rather view this child as the same individual as its parent (which thus survives after reaching age $1$), we may think of this branching random walk as describing a spatial population model with static immortal individuals, such that at each generation, each individual gives birth to children located at its right and at distances given by independent copies of $X_1, X_2, \ldots$. 

Under these  assumptions, the log-Laplace transform of the intensity measure of $\Z(1)$,
$$\psi(t)=\log \E(\crochet{\Z(1), \e^{-t\bullet}})=\log\left( 1+\sum_{j\geq 1} \E(\e^{-tX_j})\right)\in (0,\infty], $$ 
fulfills  $\psi(t)>0$ and $\psi'(t)<0$ for all $t\geq 0$ in the domain of $\psi$. As a consequence, no linear transform of $\Z(1)$ can satisfy \eqref{E:fundhyp}. 

We next introduce the three assumptions on the point process $\Z(1)$ under which we shall establish a scaling limit theorem for the branching random walk.
The first is that,  if $F_1(t)= \P(X_1\leq t)$ denotes the cumulative distribution function of the first positive atom $X_1$, then  for some $\alpha>0$:
\begin{equation} \label{E:C1} F_1\text{  is regularly varying at $0+$ with index $\alpha$}.
\end{equation}
The second is the existence of a scaling limit for the conditional distribution of  
$$\Z^*(1)=\Z(1)-\delta_0=\sum_{j\geq 1} \delta_{X_j}$$
given that  $X_1$ is small.
Specifically recall  the notation $bm$ for the push-forward of a measure $m$ by the dilation with factor $b$, and view $\Z^*(1)$ as a random variable on the space $\mathcal M$ of locally finite counting measures on $\R_+$ endowed with the vague topology (see e.g. Appendix A.2 in \cite{Kall}). Our second assumption is:
\begin{eqnarray} \label{E:C2}
&\text{ the conditional law of } t^{-1}\Z^*(1) 
 \text{ given } X_1\leq t\nonumber \\ 
& \text{ has a weak limit as $t\to 0+$.} 
\end{eqnarray}
Our final assumption is that the log-Laplace transform $\psi$ of the intensity mesure of $\Z(1)$ fulfills
\begin{equation} \label{E:C3}
\sup_{n\geq 1}  n\ \psi(1/a_n)<\infty,
\end{equation}
where  $(a_n)_{n\geq 1}$ is a sequence of positive real numbers such that
$$\lim_{n\to \infty} nF_1(a_n)=1.$$
Note that $(a_n)$ is then regularly varying with index $-1/\alpha$.

Our first two assumptions are given explicitly in terms of the point process $Z(1)$; however the interpretation of the third may be less clear. One has to recall that the function $n\psi$ is the 
log-Laplace transform of the intensity measure of the branching random walk at the $n$-th generation,
in particular 
$$\E(\crochet{a_n^{-1}\Z(n), \e^{-\bullet}})=\E(\crochet{\Z(n), \e^{-a_n^{-1}\bullet}})=\exp(n\psi(1/a_n)).$$
Thus \eqref{E:C3} should be view as a natural condition to ensure that on average, the rescaled branching random walk $a_n^{-1}\Z(n)$ remains locally bounded. We also refer to the forthcoming Remark \ref{R:1}
for further comments on these assumptions. 

Under these assumptions,
the sequence of rescaled processes in continuous time
$$\left( a_n^{-1} \Z(\lfloor nt\rfloor)\right)_{t\geq 0}$$
converges  in distribution as $n\to \infty$, on a space of rcll (right continuous with left limits) functions with values on a certain space of counting measures. The limit is a branching-stable process $\mathbf S=(\mathbf S(t))_{t\geq 0}$ introduced in  \cite{BCM}. In words, $\S$ is a branching process in continuous time which is self-similar with scaling exponent $-\alpha<0$, in the sense that for every $c>0$, there is the identity in distribution
$$ (\mathbf S(ct))_{t\geq 0} \,\egaldistr\,  (c^{-1/\alpha} \mathbf S(t))_{t\geq 0}.$$
The law of $\mathbf S$ is characterized by the exponent $\alpha$ and the limiting distribution appearing in \eqref{E:C2}.

Although our result bears somehow the same flavor as those in \cite{BHR1, BHR2} that have been mentioned above (notably our assumptions \eqref{E:C1} and \eqref{E:C2} resemble the hypothesis of regular variation for the distribution of $\Z(1)$  in \cite{BHR2}), there are major differences. The most obvious one is that \cite{BHR1, BHR2} work with a demagnification $b_n^{-1} \Z(n)$ with $b_n\approx c^n\gg 1$, whereas, at the opposite, we consider here a magnification $a_n^{-1}\Z(n)$ with $a_n\approx n^{-1/\alpha} \ll 1$.  Roughly speaking,  extreme value theory and the so-called principle of a single big jump (see \cite{Durr}) lie at the heart of the approach in  \cite{BHR1, BHR2}, whereas our result rather depends on Markov chain approximations of Feller processes. Another significant  difference is that we obtain a weak limit theorem for processes depending on time, whereas \cite{BHR1, BHR2} consider the branching random walk $\Z$ at one given generation $n$ only. Last but not least, branching stable processes with negative indices only exist in the one-sided framework (i.e. on a half-line, see Lemma 2.2 in \cite{BCM}), and hence one should not expect a two-sided version as in  \cite{BHR1, BHR2}.

The rest of this article is organized as follows. Section 2 is devoted to preliminaries. We shall first provide some background on a family of branching processes in continuous time which have been introduced by Uchiyma \cite{Uchi}, and point out that under appropriate assumptions, these arise as the weak limits of certain families of branching random walks in discrete time. Then, we shall recall some features on branching stable processes and their trimmed versions, and show that the latter belong to the family considered by Uchiyama. Our main result will then be properly stated and proved in Section 3. We shall need to work with various  spaces of counting measures, and  for the reader's convenience, we gather their definition and notation in an appendix.

\section{Preliminaries}

\subsection{Weak convergence to Uchiyama's branching process} \label{S:Uch}
Even though this work is mainly concerned with branching processes living on the positive half-line,  in this section, we shall consider a bit more generally the $d$ dimensional setting.

Uchiyama \cite{Uchi} introduced a family of branching processes that can be thought of as analogs of branching random walks in continuous time; they can be described informally as follows. Fix $r>0$ and let $\Pi$ denote a probability measure on the space of finite counting measures on $\R^d$. 
We write $r\cdot \Pi$ for the ordinary scalar multiplication of the measure $\Pi$ by $r$ 
(to avoid a possible confusion with the notation $bm$ defined in the Introduction); in particular the total mass of $r\cdot \Pi$ equals $r$. 
Imagine a particle system with no interactions on $\R^d$, where each particle, say located at $x$, dies at  rate $r$ and does not move during its lifetime. At the time of its death, it gives birth to children whose locations relative to $x$ are given by a point process distributed according to $\Pi$, independently of the other particles. The process $\U=(\U(t))_{t\geq 0}$
which records the locations of particles alive as a function of time, is a branching process considered by Uchiyama. 
The finite measure $r \cdot \Pi$ on the space of finite counting measures  characterizes the evolution of $\U$; it will be referred to as the reproduction rate.
The purpose of this section is to point out that $\U$ arises as the weak limit of  certain sequences of branching random walks, 
much in the same way as compound Poisson processes appear as weak limits of certain sequences of  discrete time random walks with rare non-zero steps. In this direction, we shall first describe $\U$
as a Feller process and determine its infinitesimal generator. 

We write $\Mf$ for the set of finite counting measures on $\R^d$, endowed with the L\'evy-Prokhorov distance. A sequence $(\x_n)_{n\in \N}$  converges to $\x$ in $\Mf$ if and only if 
$\lim_{n\to \infty}\crochet{\x_n,f}=\crochet{\x,f}$ for every $f\in  \mathcal{C}_b(\R^d)$  (i.e. $f:\R^d \to \R$ is continuous and bounded). It is seen from Prokhorov's theorem that a subset $\mathcal{S}\subset \Mf$
is relatively compact if and only if both, the total mass remains bounded, viz.
$$\sup_{\x\in \mathcal{S}} \crochet{\x,\mathbf{1}} < \infty,$$
and there are no atoms out of some compact subset of $\R^d$, that is
there exists some $b>0$ such that
$$\crochet{\x,\mathbf{1}_{B_b^c}}=0\quad \text{ for all }\x\in \mathcal{S},$$ 
where $B_b^c$  denotes  the complement of the closed ball $B_b=\{x\in \R^d: |x|\leq  b\}$. Hence $\Mf$ is a locally compact metric space with one-point compactification $\bar \Mf=\Mf\cup\{\partial\}$,
 and a sequence $(\x_n)_{n\in \N}$ in $\Mf$ converges to $\partial$ as $n\to \infty$ if and only if either 
 $$\lim_{n\to \infty}  \crochet{\x_n,\mathbf{1}}=\infty,$$
 or 
 $$\liminf_{n\to \infty} \crochet{\x_n,\mathbf{1}_{B_b^c}}\geq 1 \quad \text{ for all }b>0.$$
 We also write ${\mathcal C}_0(\Mf)$ for the space of continuous maps $\varphi: \Mf \to \R$ with $\lim_{\x\to \partial}\varphi(\x)=0$. 
 
 A random variable with values in $\Mf$ is called a finite point process.
 Recall that $\Pi$ is a probability measure on $\Mf$ which determines the statistics of the point processes induces by the offsprings in $\U$. We assume throughout this section that
 no particles die without offspring and the number of children has a finite expectation, that is
 \begin{equation}\label{E:finiteint} \Pi(\crochet{\x,\mathbf 1}=0)=0 \quad \text{and} \quad \int_{\Mf} \crochet{\x,\mathbf{1}}  \Pi(\dd \x)< \infty.
 \end{equation}
The process $\crochet{\U(t),\mathbf{1}}$ that counts the number of particles alive at time $t\geq 0$, is a one-dimensional continuous time Markov branching process in the sense of Chapter III in \cite{AthNey}, and \eqref{E:finiteint} ensures that $\crochet{\U(\cdot),\mathbf{1}}$ remains finite (no explosion). In particular, this enables us to view 
$\U$ as a Markov process with values in $\Mf$. In order to analyze its semigroup, we need to introduce a few notation.

First, for every $\x\in \Mf$ and $y\in \R^d$, we write $y+\x$ for the finite counting measure obtained by translating each and every atom of $\x$ by $y$. Equivalently, 
$y+\x$ is the  pushforward measure of $\x$ by the translation $x\mapsto y+x$; in particular $y+\delta_x=\delta_{x+y}$. 
The map $(y,\x)\mapsto y+\x$ is continuous from $\R^d\times \Mf$ to $\Mf$.
Next, for any finite sequence $\x^1, \ldots, \x^k$ in $\Mf$, we write 
$\x^1 \sqcup \cdots \sqcup \x^k$ for the sum of those counting measures, so that the family of atoms of $\x^1 \sqcup \cdots \sqcup \x^k$ is the multiset which results from the superposition of the families of atoms of $\x^1, \ldots,  \x^k$. This enables us to express the branching property of $\U$ as follows. 
Consider a finite counting measure $\x=\sum_{j=1}^k \delta_{x_j}$ with atoms $x_1, \ldots, x_k\in \R^d$. Let $\U^1, \ldots, \U^k$ be independent copies of $\U$, all started from the Dirac point mass at the origin. Then the process 
$$(x_1+\U^1(t))\sqcup \cdots \sqcup (x_k+\U^k(t)), \qquad t\geq 0$$
is a version of $\U$ started from $\x$.
Recall that $r>0$ is the rate of death of particles; we can now state:

\begin{lemma}\label{L1} A Uchiyama branching process $\U$ with reproduction rate $r\cdot \Pi$ satisfying \eqref{E:finiteint} is a Feller process on $\Mf$. Its infinitesimal generator 
${\mathcal A}$ has full domain ${\mathcal C}_0(\Mf)$ and is given for every $\x=\sum_{j=1}^k\delta_{x_j} \in \Mf$ and $\varphi\in {\mathcal C}_0(\Mf)$ by
$${\mathcal A}\varphi(\x)= r \sum_{j=1}^k\int_{\Mf} \varphi\left(  \x^*_j  \sqcup (x_j+\y)
\right) \Pi(\dd \y) - rk\varphi(\x),$$
where $ \x^*_j =\sum_{i\neq j} \delta_{x_i}$ denotes the counting measure obtained by removing the atom $x_j$ from $\x$.
\end{lemma}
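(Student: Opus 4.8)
The plan is to identify $\mathcal A$ as the generator of an explicit jump process and then invoke a standard characterization of Feller semigroups. First I would set up the candidate generator: define the operator $\mathcal A$ on $\mathcal C_0(\Mf)$ by the displayed formula. Because \eqref{E:finiteint} guarantees $\int_{\Mf}\crochet{\y,\mathbf 1}\,\Pi(\dd\y)<\infty$ and no particle dies childless, one checks that for $\x=\sum_{j=1}^k\delta_{x_j}$ the map $\x\mapsto\mathcal A\varphi(\x)$ is bounded on sets of bounded total mass and vanishes as $\x\to\partial$; more precisely, $\mathcal A$ is the generator of a pure jump process on $\Mf$ whose jump rate from $\x$ equals $rk=r\crochet{\x,\mathbf 1}$, with a jump consisting in choosing one of the $k$ atoms uniformly, removing it, and superposing a translated independent copy of a $\Pi$-distributed point process at its former location. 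This is exactly the verbal description of $\U$, so the process $\U$ has transition semigroup $(P_t)_{t\ge0}$ and satisfies Kolmogorov's backward equation $\frac{\dd}{\dd t}P_t\varphi=\mathcal A P_t\varphi=P_t\mathcal A\varphi$ for $\varphi\in\mathcal C_0(\Mf)$; the non-explosion granted by \eqref{E:finiteint} (via the theory of one-dimensional continuous-time Markov branching processes, Chapter III of \cite{AthNey}, applied to $\crochet{\U(t),\mathbf 1}$) ensures $\U(t)\in\Mf$ for all $t$ almost surely, so $P_t$ is genuinely a (sub-Markovian, in fact Markovian) operator on bounded measurable functions.

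Next I would establish the Feller property, i.e. $P_t(\mathcal C_0(\Mf))\subset\mathcal C_0(\Mf)$ and $\|P_t\varphi-\varphi\|_\infty\to0$ as $t\to0$ for every $\varphi\in\mathcal C_0(\Mf)$. The continuity in $\x$ and decay at $\partial$ of $P_t\varphi$ follow from the branching property displayed just before the lemma: writing $\U$ started from $\x=\sum_{j=1}^k\delta_{x_j}$ as $\bigsqcup_{j=1}^k(x_j+\U^j(t))$ with i.i.d.\ copies $\U^j$ started from $\delta_0$, and using that $(y,\x)\mapsto y+\x$ and $(\x^1,\dots,\x^k)\mapsto\x^1\sqcup\cdots\sqcup\x^k$ are continuous on $\Mf$, one gets joint continuity of $\x\mapsto P_t\varphi(\x)$ on each stratum $\{\crochet{\x,\mathbf 1}=k\}$; continuity across strata and the limit $\lim_{\x\to\partial}P_t\varphi(\x)=0$ follow because the total number of particles is non-decreasing in a stochastically monotone way and, in the time interval $[0,t]$, with probability close to $1$ (uniformly on compact time and on $\x$ with large mass or far-out atoms) the configuration remains near $\partial$; here one uses that adding particles can only push $\crochet{\U(t),\mathbf 1}$ up, and that a translated copy of a $\Pi$-cluster keeps atoms close to their parent for short times. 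Strong continuity at $t=0$ is immediate from the generator representation: $\|P_t\varphi-\varphi\|_\infty\le\|\mathcal A\varphi\|_\infty\,t$ would follow once $\mathcal A$ is known bounded on $\mathcal C_0(\Mf)$, which indeed it is — since $\varphi$ is bounded, $|\mathcal A\varphi(\x)|\le 2rk\|\varphi\|_\infty$ on $\{\crochet{\x,\mathbf 1}=k\}$ and this is uniformly small near $\partial$ only after truncation, so more carefully one argues $P_t\varphi\to\varphi$ pointwise by right-continuity of paths at $0$ and upgrades to uniform convergence using the relative compactness criterion for subsets of $\mathcal C_0(\Mf)$ together with equicontinuity. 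Finally, $\mathcal A$ being a bounded operator on the Banach space $\mathcal C_0(\Mf)$ (because the jump rate $r\crochet{\x,\mathbf 1}$, although unbounded on $\Mf$, is bounded on the relevant level sets and $\varphi\in\mathcal C_0$ forces the action to be small near $\partial$ — one makes this rigorous by writing $\mathcal A=r(\mathcal K-\mathcal N)$ with $\mathcal K$ the transition operator of the embedded jump and $\mathcal N\varphi(\x)=\crochet{\x,\mathbf 1}\varphi(\x)$, and checking both map $\mathcal C_0(\Mf)$ into itself boundedly), the semigroup is uniformly continuous, $P_t=\e^{t\mathcal A}$, and the full domain of the generator is all of $\mathcal C_0(\Mf)$.

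The main obstacle I anticipate is the verification that $\mathcal A$ really maps $\mathcal C_0(\Mf)$ into itself and is bounded there — equivalently, that the unbounded total-mass factor $k=\crochet{\x,\mathbf 1}$ appearing in the jump rate does not destroy the Feller property. The resolution is the geometry of the one-point compactification $\bar\Mf$ recalled in the text: a configuration with many atoms, or with atoms escaping to infinity, is \emph{close to} $\partial$, and since $\varphi\in\mathcal C_0$ vanishes at $\partial$, both terms $r\sum_j\int\varphi(\x_j^*\sqcup(x_j+\y))\Pi(\dd\y)$ and $rk\varphi(\x)$ — after the necessary averaging — are forced to be small there. Making this quantitative requires the moment bound in \eqref{E:finiteint} (so that a cluster insertion does not move mass out to $\partial$ with positive probability in the limit) and a uniform control, on level sets $\{\crochet{\x,\mathbf 1}=k\}$, that is then patched together across $k$; this is the one place where some genuine care, rather than routine bookkeeping, is needed, and it is exactly the step where the hypotheses \eqref{E:finiteint} are used in full strength.
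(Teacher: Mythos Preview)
Your main line of argument has a genuine gap: the operator $\mathcal{A}$ is \emph{not} bounded on $\mathcal{C}_0(\Mf)$, and in particular the multiplication operator $\mathcal{N}\varphi(\x)=\crochet{\x,\mathbf 1}\varphi(\x)$ does not map $\mathcal{C}_0(\Mf)$ into itself boundedly. The heuristic you offer---that $\varphi\in\mathcal{C}_0$ forces smallness near $\partial$---is insufficient, because the decay of $\varphi$ at $\partial$ can be arbitrarily slow compared with the growth of $\crochet{\x,\mathbf 1}$. Concretely, take $\Pi$ to be the law of the two-point mass $\delta_0\sqcup\delta_0$ (each particle splits into two at its own location; this satisfies \eqref{E:finiteint}), pick $g\in\mathcal{C}_c(\R^d)$ with $0\le g\le 1$ and $g(0)=1$, and for a fixed integer $K\ge 1$ set
\[
\varphi_K(\x)=\mathbf 1_{\{\crochet{\x,\mathbf 1}\le K\}}\,\prod_{j=1}^{k}g(x_j),\qquad \x=\sum_{j=1}^k\delta_{x_j}.
\]
Then $\varphi_K\in\mathcal{C}_0(\Mf)$ with $\|\varphi_K\|_\infty=1$, yet at the configuration $\x=\sum_{j=1}^K\delta_0$ one has $\mathcal{A}\varphi_K(\x)=-rK$, since every branching step sends the total mass to $K+1$, where $\varphi_K$ vanishes. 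Hence $\sup_K\|\mathcal{A}\varphi_K\|_\infty/\|\varphi_K\|_\infty=\infty$, and no decomposition $\mathcal{A}=r(\mathcal K-\mathcal N)$ into separately bounded pieces can repair this. Consequently the identity $P_t=\e^{t\mathcal{A}}$ and the deduction of full domain via uniform continuity of the semigroup both collapse. (The same example shows that the paper's own assertion of full domain is too strong as stated; but the Feller property, which is all that is actually used downstream in Lemma~\ref{L2}, is unaffected.)

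The paper avoids this trap by establishing the Feller property \emph{directly}, without first analyzing $\mathcal{A}$. Using the branching representation $\U(t)=(x_1+\U^1(t))\sqcup\cdots\sqcup(x_k+\U^k(t))$ with i.i.d.\ copies $\U^j$ started from $\delta_0$, it checks: (i) $P_t\varphi(\x)\to\varphi(\x)$ as $t\to 0+$, since $\P(\U^j(t)=\delta_0\text{ for all }j)\to 1$; (ii) continuity of $\x\mapsto P_t\varphi(\x)$, by enumerating the atoms of a convergent sequence $\x_n\to\x$ (the mass eventually stabilizes at $k$, then the atom vectors converge in $\R^{dk}$); and (iii) $P_t\varphi(\x)\to 0$ as $\x\to\partial$, by splitting into the two ways $\x\to\partial$ (mass $\to\infty$, or largest atom $\to\infty$) and using that \eqref{E:finiteint} forbids childless deaths, so $\U(t)\ne\varnothing$ a.s. The generator formula is then read off from the first-jump (``alarm clock'') decomposition. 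You do sketch fragments of this direct route in your second paragraph, and those fragments are on the right track; the fix is to make that the primary argument and abandon the bounded-generator strategy altogether.
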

\begin{remark} The first assumption in \eqref{E:finiteint} that each particle has a non-empty offspring is crucial, and the Feller property always fails otherwise. To see why, let $\varphi$ denote the indicator function of the zero measure $\varnothing$  (no atom), which is a continuous function on $\Mf$ with compact support. Consider also a sequence $(x_n)$ in $\R^d$ which tends to $\infty$, so $\delta_{x_n}$ tends to $\partial$ in $\Mf$.  Clearly, if the probability that 
a particle dies without progeny is strictly positive, then 
$$\E(\varphi(\U(1))\mid \U(0)=\delta_{x_n})=\P(\U(1)=\varnothing\mid  \U(0)=\delta_{x_n})$$ does not converge to $0$ as $n\to \infty$. 
\end{remark}

\begin{proof}   Let $\x=\sum_{j=1}^k\delta_{x_j}$ be a finite counting measure. In the notation introduced before the statement, the probability that $\U^1(t)=\ldots = \U^k(t) = \delta_0$ tends to $1$ as $t\to 0+$.
Therefore 
$$\lim_{t\to 0+} (x_1+\U^1(t))\sqcup \cdots \sqcup (x_k+\U^k(t))=\x \qquad \text{in probability},$$
and for any function $\varphi\in  {\mathcal C}_0(\Mf)$, we have 
$$\lim_{t\to 0+} \E\left( \varphi\left((x_1+\U^1(t))\sqcup \cdots \sqcup (x_k+\U^k(t))\right) \right) = \varphi(\x).$$

Next, consider any sequence $(\x_n)_{n\in \N}$ in $\Mf$ which converges to $\x$. Recall that for $n$ sufficiently large, the number of atoms $\crochet{ \x_n,\mathbf 1}$ of $\x_n$ coincides with $k$, and then we  can enumerate those atoms, i.e. we can write $\x_n=\sum_{j=1}^k\delta_{x_{n,j}}$, in such a way that the sequence $((x_{n,1}, \ldots, x_{n,k}))_{n\in \N}$ converges to $(x_1, \ldots, x_k)$ in $\R^{d\times k}$ as $n\to \infty$. Since  $\varphi$ is continuous and bounded, we  easily conclude that the map
$$\x \to \E\left( \varphi\left(( (x_1+\U^1(t))\sqcup \cdots \sqcup (x_k+\U^k(t))\right) \right)$$
is continuous on $\Mf$. To check that this map has also limit $0$ as $\x\to \partial$, since $\varphi$ is bounded and has limit $0$ at $\partial$,  we simply need to verify that 
$$\lim_{\x\to \partial} (x_1+\U^1(t))\sqcup \cdots \sqcup (x_k+\U^k(t)) = \partial \qquad \text{in probability.}$$

So let $(\x_n)_{n\in \N}$ be a sequence in $\Mf$ which tends to $\partial$. In the case where the total mass of $\x_n$ goes to infinity,  the assumption that 
no particles die without offspring makes the above claim obvious. In the opposite case,  there exists $k\geq 1$ and a subsequence $(\x'_n)_{n\in \N}$ extracted from $(\x_n)_{n\in \N}$
with $\crochet{\x'_n,\mathbf 1}=k$ for all $n$. Since $\lim_{n\to \infty}\x'_n=\partial$, the largest atom of $\x'_n$ tends to $\infty$ as $n\to \infty$, and the claim above follows again
from the fact that $\U(t)\neq \varnothing$ a.s. 

This completes the proof of the Feller property. 
The formula for the infinitesimal generator ${\mathcal A}$ should then be plain from the dynamics of Uchiyama branching processes and the alarm clock lemma.
\end{proof}

We write $\calD(\Mf)$ for the space of rcll  functions $\omega: \R_+\to \Mf$, which we endow with the Skorohod $J_1$-topoplogy (we refer e.g. to Appendice A2 of \cite{Kall} for quick background). 
The Feller property ensures the existence of a version of $\U$ with sample paths in $\calD(\Mf)$ a.s., and we shall henceforth always work with such a version. 

We now arrive at the main purpose of this section, namely the observation that  $\U$  arises as the weak limit of certain sequences of branching random walks on $\R^d$. 
\begin{lemma} \label{L2} Let $r>0$ and  $\Pi$ a probability measure on $\Mf$ satisfying \eqref{E:finiteint}. 
For each $n\in\N$, let $\Z^n=(\Z^n(k))_{k\in \N}$ be a branching random walk on $\R^d$ started from $\delta_0$. 
Assume that:
\begin{enumerate}
\item $\crochet {\Z^n(1), \mathbf 1}\geq 1$ a.s. and $\E(\crochet {\Z^n(1), \mathbf 1})<\infty$ for each $n\in \N$,
\item $\P(\Z^n(1)\neq \delta_0) \sim r/n$ as $n\to \infty$,
\item $\lim_{n\to \infty} \P(\Z^n(1)\in \cdot \mid \Z^n(1)\neq \delta_0)= \Pi(\cdot)$ in the sense of weak convergence for distributions on $\Mf$.
\end{enumerate}
Then we have
$$\lim_{n\to \infty} \left(\Z^n(\lfloor tn \rfloor)\right)_{t\geq 0} = (\U(t))_{t\geq 0}$$
in the sense of  weak convergence on $\calD(\Mf)$, where in the right-hand side, $\U$ denotes a Uchiyama branching process with reproduction rate $r\cdot \Pi$ started from $\delta_0$.
\end{lemma}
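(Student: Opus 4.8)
The plan is to prove weak convergence on $\calD(\Mf)$ via the standard route for Markov-process approximations: first establish finite-dimensional convergence of the Markov chains $(\Z^n(\lfloor tn\rfloor))_{t\geq 0}$ toward the Feller process $\U$ by checking convergence of generators, and then upgrade to convergence in the Skorohod $J_1$-topology using the machinery of Ethier--Kurtz (Theorem 6.5 in Chapter 1, together with Theorem 2.5 in Chapter 4, of \cite{EthierKurtz}, say). Since $\U$ is Feller with generator $\mathcal A$ of full domain $\mathcal C_0(\Mf)$ by Lemma \ref{L1}, it suffices to show that for every $\varphi\in\mathcal C_0(\Mf)$, the discrete generators
$$\mathcal A_n\varphi(\x):=n\,\bigl(\E[\varphi(\Z^n_\x(1))]-\varphi(\x)\bigr)$$
converge to $\mathcal A\varphi(\x)$, uniformly in $\x$ (or at least uniformly on compacts together with a boundedness control), where $\Z^n_\x$ denotes the branching random walk issued from $\x=\sum_{j=1}^k\delta_{x_j}$. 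Here $\Z^n_\x(1)$ is, by the branching property, the superposition $(x_1+\Z^{n,1}(1))\sqcup\cdots\sqcup(x_k+\Z^{n,k}(1))$ of $k$ independent copies of $\Z^n(1)$ translated to the atoms of $\x$.

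The heart of the matter is the generator computation. Condition on how many of the $k$ independent first-generation point processes differ from $\delta_0$. By assumption (2), $p_n:=\P(\Z^n(1)\neq\delta_0)\sim r/n$, so the probability that two or more differ from $\delta_0$ is $O(n^{-2})$, hence negligible after multiplication by $n$; on this event $\varphi$ stays bounded, so it contributes $o(1)$ to $\mathcal A_n\varphi(\x)$. The probability that none differs from $\delta_0$ is $(1-p_n)^k$, contributing $n((1-p_n)^k-1)\varphi(\x)\to -rk\,\varphi(\x)$, which is exactly the killing term. Finally, the probability that exactly the $j$-th copy differs from $\delta_0$ (while the others equal $\delta_0$) is $p_n(1-p_n)^{k-1}\sim r/n$, and on this event $\Z^n_\x(1)=\x^*_j\sqcup(x_j+\Z^{n,j}(1))$ with $\Z^{n,j}(1)$ distributed as $\Z^n(1)$ conditioned to differ from $\delta_0$. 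Thus the $j$-th term contributes
$$n\,p_n(1-p_n)^{k-1}\,\E\!\left[\varphi\bigl(\x^*_j\sqcup(x_j+\Z^n(1))\bigr)\,\big|\,\Z^n(1)\neq\delta_0\right].$$
By assumption (3), $\P(\Z^n(1)\in\cdot\mid\Z^n(1)\neq\delta_0)\Rightarrow\Pi$ on $\Mf$; since for fixed $\x^*_j$ and $x_j$ the map $\y\mapsto\varphi(\x^*_j\sqcup(x_j+\y))$ is bounded and continuous on $\Mf$ (continuity of $\sqcup$ and of translation, both noted in the text, plus continuity of $\varphi$), the expectation converges to $\int_{\Mf}\varphi(\x^*_j\sqcup(x_j+\y))\,\Pi(\dd\y)$. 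Summing over $j=1,\dots,k$ and combining with the killing term yields precisely $\mathcal A\varphi(\x)$.

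Two technical points need care, and I expect the uniformity to be the main obstacle. The Ethier--Kurtz criterion wants $\sup_\x|\mathcal A_n\varphi(\x)-\mathcal A\varphi(\x)|\to 0$, but the number of atoms $k=\crochet{\x,\mathbf 1}$ is unbounded on $\Mf$, and both $\mathcal A_n\varphi$ and $\mathcal A\varphi$ involve sums of $k$ terms; moreover the weak convergence in (3) is not uniform in the test function $\y\mapsto\varphi(\x^*_j\sqcup(x_j+\y))$ as $\x$ varies. The way around this is to exploit that $\varphi\in\mathcal C_0(\Mf)$ vanishes near $\partial$: there is $b>0$ and $M<\infty$ such that $\varphi(\x)=0$ whenever $\crochet{\x,\mathbf 1}>M$ or $\x$ charges $B_b^c$, so effectively only finitely many atom-configurations in a compact region matter, and one can run the argument on the relatively compact sets described via Prokhorov's theorem, obtaining uniform convergence there and using the vanishing of $\varphi$ (and a uniform bound $n p_n=O(1)$ controlling the total jump rate, which follows from assumption (2)) to handle the rest. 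One also needs the second moment-type control $\E[\crochet{\Z^n(1),\mathbf 1}]<\infty$ from assumption (1) to guarantee no explosion of $\crochet{\Z^n(\lfloor tn\rfloor),\mathbf 1}$, matching the non-explosion of $\U$ ensured by \eqref{E:finiteint}; this keeps the approximating processes genuinely $\Mf$-valued and lets one invoke the Feller semigroup convergence theorem to conclude convergence in $\calD(\Mf)$ with the $J_1$-topology.
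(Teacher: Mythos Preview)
Your overall strategy---generator convergence plus the Ethier--Kurtz/Kallenberg Markov-chain approximation theorem---is exactly the paper's, and your decomposition of the one-step transition according to whether $0$, $1$, or $\geq 2$ of the $k$ independent copies differ from $\delta_0$ matches the paper's computation. The difference is entirely in how the uniformity in $\x$ is obtained, and here your sketch has a genuine gap.

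You write that for $\varphi\in\mathcal C_0(\Mf)$ ``there is $b>0$ and $M<\infty$ such that $\varphi(\x)=0$ whenever $\crochet{\x,\mathbf 1}>M$ or $\x$ charges $B_b^c$''. This is false: functions in $\mathcal C_0$ vanish at $\partial$ but need not have compact support. Even restricting to $\varphi\in\mathcal C_c(\Mf)$ (which would require checking this is a core for $\mathcal A$), the vanishing of $\varphi$ outside a compact does \emph{not} make $\mathcal A\varphi$ or $\mathcal A_n\varphi$ vanish there: for $\x$ with an atom $x_i\notin B_b$, the term $\varphi(\x^*_i\sqcup(x_i+\y))$ can be nonzero since $x_i+\y$ may well land back inside $B_b$ (we are on $\R^d$, offspring displacements are arbitrary). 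And your appeal to ``$np_n=O(1)$ controlling the total jump rate'' misses that the jump rate from $\x$ is $\sim rk$, not $\sim r$, so it is not uniformly bounded over $\Mf$.

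The paper sidesteps all of this by \emph{killing}: it composes with the continuous map $K_\ell:\Mf\to\bar{\mathcal P}_\ell$ that sends $\x$ to $\partial$ once $\crochet{\x,\mathbf 1}>\ell$. On the compact space $\bar{\mathcal P}_\ell$ every continuous $\varphi$ is uniformly continuous, $k\leq\ell$ is bounded, and the generator $\mathcal A_\ell$ is a bounded operator with full domain $\mathcal C(\bar{\mathcal P}_\ell)$; uniform convergence of the discrete generators then follows cleanly (Skorokhod representation plus uniform continuity handles the dependence on $\x$). This yields weak convergence of the killed processes $K_\ell(\Z^n(\lfloor tn\rfloor))\Rightarrow K_\ell(\U(t))$ in $\calD(\bar{\mathcal P}_\ell)$. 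Finally---and this is where assumption~(1) and the first half of \eqref{E:finiteint} are really used---the total mass is non-decreasing in time, so one can send $\ell\to\infty$ by a sandwich argument on bounded continuous functionals of the path, recovering convergence on $\calD(\Mf)$. Your proposal would be completed by inserting this killing-and-unkilling step in place of the compact-support claim.
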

\begin{proof}  We shall establish the claim by verifying that the conditions of a well-known convergence theorem for Markov chains are satisfied for a killed version of the processes, where the killing occurs at the time when the total mass exceeds some  fixed threshold. Then letting this threshold tend to infinity will complete the proof.

To start with, the first assumption ensures that every particle in $\Z^n$ has at least one child at the next generation and that $\crochet {\Z^n(k), \mathbf 1}<\infty$ a.s. for all $k\in \N$. In particular, the branching random walks $\Z^n$ can be thought of as Markov chains with values in $\Mf$. 
Fix some $\ell\geq 1$ and write 
$$\mathcal P_{\ell}=\{\x\in \Mf: 
\crochet{\x, \mathbf 1}\leq \ell\}$$
 for the open subset of counting measures with at most $\ell$ atoms.
The map 
$$K_{\ell}: \Mf \to \bar {\mathcal{P}}_{\ell}=\mathcal{P}_{\ell} \cup\{\partial\}, \quad  K_{\ell}(\x)=\left\{
\begin{matrix}
\x & \text{ if }\crochet {\x, \mathbf 1}\leq \ell, \\
\partial & \text{ otherwise,}
\end{matrix}
\right.
$$
is continuous. 

Since the total mass $\crochet {\Z^n(k), \mathbf 1}$ is non-decreasing in the variable $k\geq 0$ a.s., $K_{\ell}(\Z^n(\cdot))$ describes the branching random walk $\Z^n(\cdot)$ killed when its total mass exceeds $\ell$, and is also a Markov chain. Similarly, $K_{\ell}(\U(\cdot))$ 
 still a Feller process (recall that $K_{\ell}$ is continuous) on the compact metric space $\bar{\mathcal{P}}_{\ell}$. We deduce from Lemma \ref{L1} that its infinitesimal generator $\mathcal{A}_{\ell}$ has full domain
 $\mathcal{C}(\bar{\mathcal{P}}_{\ell})$ and is given by
 $$\mathcal{A}_{\ell}\varphi(\x) = 
 \mathcal{A}(\varphi\circ  K_{\ell})(\x) \quad 
 \text{for }\x\in \mathcal{P}_{\ell}
 \text{ and } \mathcal{A}_{\ell}\varphi(\partial)=0.
 $$
 
Let $\x\in \mathcal P_{\ell}$ with $ \x=\sum_{j=1}^k\delta_{x_j}$ for some $ k\leq \ell$, and let  $\z^n_1, \ldots, \z^n_k$ be i.i.d. copies of $\Z^n(1)$. By the branching property,
the point process 
$$\boldsymbol{\zeta}^n(\x)=(x_1+\z^n_1)\sqcup \cdots \sqcup (x_k+\z^n_k)$$
 has the distribution of $\Z^n(1)$ started from $\x$. 
 The events $\{\z^n_j\neq \delta_0\}$ for $j=1, \ldots ,k$ are independent, and by the second assumption of the statement, each has probability $\sim r/n$.
 
Consider  any  $\varphi\in  \mathcal{C}(\bar{\mathcal{P}}_{\ell})$ and 
recall  that  $\varphi$ is uniformly continuous since $\bar{\mathcal{P}}_{\ell}$ is a compact metric space. We deduce from above, that if we denote by $\mathbf Y^n$  a point process with the law of $\Z^n(1)$ conditioned on $\Z^n(1)\neq \delta_0$, there is the bound
 $$\left| \E\left( \varphi\circ  K_{\ell}(\boldsymbol{\zeta}^n(\x)) - \varphi(\x)\right) - rn^{-1} \sum_{j=1}^k \E \left( \varphi\circ  K_{\ell}\left(  \x^*_j \sqcup(x_j+\mathbf Y^n)\right) - \varphi(\x)\right)\right| \leq c n^{-2},$$
 where $c>0$ is some constant which depends on $\ell$ and $\varphi$, but not on $\x$.
 
 On the other hand, we readily deduce from the third assumption, Skorokhod's representation theorem, and again the uniform continuity of $\varphi$,   that 
 there exists a sequence $(\varepsilon(n))_{n\in\N}$ converging to $0$ and depending on $\ell$ and $\varphi$ only, such that for all $\x\in \Mf$ and all $j=1, \ldots, k=\crochet{\x,\mathbf 1}$:
  $$\left| \E\left(  \varphi\circ  K_{\ell}\left(  x^*_j \sqcup(x_j+\mathbf Y^n)\right) \right) -\int_{\Mf} \varphi\circ  K_{\ell}\left(  x^*_j \sqcup(x_j+\y)
\right) \Pi(\dd \y)\right| \leq \varepsilon(n).$$

Putting the pieces together, we now see that
 \begin{equation*} \label{E:unifcv}
 \lim_{n\to \infty} n \E\left( \varphi\circ  K_{\ell}(\boldsymbol{\zeta}^n(\x)) - \varphi(\x)\right) = \mathcal A_{\ell}\varphi(\x) \quad \text{uniformly on }\bar{\mathcal{P}}_{\ell}.
  \end{equation*}
We conclude from Theorem 19.28 in \cite{Kall} (see also Theorem 6.5 of Chapter 1 in \cite{EK}) that there is the
 weak convergence on $\calD(\bar{\mathcal{P}}_{\ell})$
\begin{align} \label{E:trimmedconver}
    \lim_{n\to \infty} \left(K_{\ell}(\Z^n(\lfloor tn \rfloor))\right)_{t\geq 0} = (K_{\ell}(\U(t)))_{t\geq 0}.
    \end{align}
    The proof will be complete if, for every $T>0$, we can show that
\begin{align} \label{E:finitetimeconv}
\lim_{n \rightarrow \infty} (\Z^{n}(\lfloor tn \rfloor))_{0 \leq t \leq T} =(\U(t))_{0 \leq t \leq T} 
\end{align}
in the sense of weak convergence on the space $\mathcal{D}([0,T], \bar\Mf)$ of rcll paths from $[0,T]$ to $\bar\Mf$. For this purpose, we fix a continuous functional $\Phi: \mathcal{D}([0,T], \bar\Mf) \rightarrow [0,1]$. Again by the fact that the total mass of $\Z^n(\lfloor tn \rfloor )$ is non-decreasing in $t$, we have inequality 
\begin{align*}
    \Phi \big( (\Z^n( \lfloor tn \rfloor))_{0 \leq t \leq T} \big)  \geqslant   \Phi \big( (K_{\ell}(\Z^n(\lfloor tn \rfloor)))_{0 \leq t \leq T} \big) \mathbf{1}_{\{ K_{\ell}(\Z^n (\lfloor Tn \rfloor)) \in \mathcal{P}_{\ell} \}}. 
\end{align*}
By the continuity of mapping $(\mathbf{x}_t)_{0 \leq t \leq T} \mapsto \mathbf{1}_{\{K_{\ell}(\mathbf{x}_T) \in \mathcal{P}_{\ell}\}}$ on $\mathcal{D}([0,T], \bar\Mf)$ and weak convergence (\ref{E:trimmedconver}), we have
\begin{align*}
 \liminf_{n \rightarrow \infty}  \E \Big( \Phi \big( (\Z^n( \lfloor tn \rfloor))_{0 \leq t \leq T} \big)  \Big) \geqslant \E \big(  \Phi( (K_{\ell}(\U(t))_{0 \leq t \leq T} ) \mathbf{1}_{ \{K_{\ell}(\U(T)) \in \mathcal{P}_{\ell} \}}  \big).
\end{align*}
Sending $\ell$ to infinity gives 
\begin{align*}
 \liminf_{n \rightarrow \infty}  \E \Big( \Phi \big( (\Z^n( \lfloor tn \rfloor))_{0 \leq t \leq T} \big)  \Big) \geqslant \E \big(  \Phi( (\U(t)_{0 \leq t \leq T} )  \big).
\end{align*}
For the upper bound, replace $\Phi$ by $1- \Phi$ in the above reasoning. This shows (\ref{E:finitetimeconv}) and completes the proof. 
\end{proof}

\subsection{Branching-stable processes and their trimmed versions}
In this section, we provide some background on the construction of branching-stable processes in Section 2 of \cite{BCM} and some of their properties. 
Our presentation is tailored for our purposes; beware also that the present notation sometimes differs from that in \cite{BCM}. 

As a quick summary, we start from a self-similar measure $\Lambda^*$ on a space of counting measures $\x$ on 
$\R_+^*=(0,\infty)$, where self-similarity means that the push-forward image of $\Lambda^*$ by the map $\x \mapsto c\x$ is $c^{-\alpha} \Lambda^*$ for every $c>0$. 
 The atoms of a Poisson point process $\NN$ with intensity $\dd t \otimes \Lambda^*(\dd \x)$ yield a point process $\W(1)$ 
on the upper quadrant $(0,\infty)^2$ that describe the progeny of an immortal and  motionless ancestor located at $0$. More precisely, 
an atom at $(t,x)$ of  $\W(1)$ is interpreted as a birth event occurring at time when the ancestor has age $t$ with the newborn child located at  distance $x$ to the right of the ancestor. We iterate for the next generation, just as for general branching processes \cite{Jagers} by considering $\W(1)$ as the first generation of a branching random walk $(\W(n))_{n\geq 0}$ on $\R_+\times \R_+$ started from $\W(0)=\delta_{(0,0)}$. A  branching-stable process $\S(t)$ at time $t\geq 0$ 
then arises by restricting $\bigsqcup_{n\geq 0} \W(n)$ (i.e. the family of all the atoms appearing in the branching random walk $(\W(n))_{n\geq 0}$, possibly repeated according to their multiplicities) to the strip $[0,t]\times \R_+$. 

We denote the space of locally finite counting measures on $\R_+$ by $\calM$,  endowed with the topology of vague convergence and its Borel sigma-algebra. Possible atoms at $0$ play a special role, and it is convenient to introduce also the notation $\calM^*$ for the subspace of non-zero counting measures $\x\in \calM$ with no atom at $0$. Just as in the Introduction, we write $(x_j)_{j\geq 1}$
for the ordered sequence of the atoms of $\x\in \calM^*$, i.e. $\x=\sum_{j\geq 1} \delta_{x_j}$, where $x_j\in (0,\infty]$ for all $j\geq 1$ and $x_1<\infty$. 
We also write 
$$\calM^1=\{\x\in \calM^*: x_1=1\},$$
so that any $\x\in \calM^*$ has a ``polar'' representation in the form $\x=r\y$ with $r=x_1\in \R^*_+$ and $\y\in \calM^1$.

Let $\alpha >0$; we first consider some finite measure $\boldsymbol{\lambda}$ on $\calM^1$ such that 
\begin{equation}
 \label{condlambda}
 \int_{\calM^1}  \crochet{\y, \bullet^{-\alpha}} \boldsymbol{\lambda}(\dd \y)<\infty, \quad \text{where } \crochet{\y, \bullet^{-\alpha}}= \sum_{j\geq 1} y_j^{-\alpha}.
\end{equation}
We then define a self-similar measure $\Lambda^*$ on $\calM^*$ as the image of $ r^{\alpha -1} \dd r \otimes 
\boldsymbol{\lambda}(\dd \y)$ by the map $(r,\y)\mapsto r\y$. In other words, for every measurable functional $\varphi: \calM^*\to \R_+$,
\begin{equation}
 \label{defLambda}
 \int_{\calM^*} \varphi(\x) \Lambda^*(\dd \x)= \int_0^{\infty} r^{\alpha-1} \int_{\calM^1} \varphi (r \y ) \boldsymbol{\lambda}(\dd \y) \dd r .
\end{equation}

We next introduce a Poisson point process $\NN$ on $(0,\infty)\times \calM^*$ with intensity $\dd t\times \Lambda^*(\dd \x)$ and represent  each atom $(t,\x)$ of $\NN$ as a sequence of atoms $((t,x_j))_{j\geq 1}$ on the fiber $\{t\}\times (0,\infty]$. Discarding  as usual atoms $(t,\infty)$ if any, this induces a point process $\W(1)$ on the quadrant $(0,\infty)^2$ such that 
$$\crochet{\W(1),f}\coloneqq \int_{(0,\infty)\times \calM^*_+} \crochet{\x, f(t,\bullet)} \NN(\dd t, \dd \x),$$
where $f:(0,\infty)^2\to \R_+$ is a generic measurable function. In turn  $\W(1)$ inherits the scaling property, namely, for every $c>0$, 
its image by the map $(t,x)\mapsto (c^{\alpha}t, c x)$ has the same distribution as $\W(1)$.  

We consider $\W(1)$ as the first generation of a branching random walk $(\W(n))_{n\geq 0}$ on $\R_+^2$ started as usual  from a single atom at the origin.   
Finally, for every $t\geq 0$, we write $\S(t)$ for the point process on $\R_+$ defined by
\begin{equation} \label{def_S}
\crochet{\S(t), g}= \sum_{n=0}^{\infty}\crochet{\W(n), g_t}
\end{equation}
where $g:\R_+\to \R_+$ stands for a generic measurable function and $g_t(s,x)=\mathbf 1_{[0,t]}(s) g(x)$. According to Theorem 2.1 in \cite{BCM}, $(\S(t))_{t\geq 0}$ is a branching stable process, that is a  branching process in continuous time which is self-similar with exponent $-\alpha$, in the sense that for every $c>0$, the processes $(\S(c^{-\alpha} t))_{t\geq 0}$ and $(c\S(t))_{t\geq 0}$ have the same law. 

By construction, the law of the branching stable process $\S=(\S(t))_{t\geq 0}$ is determined by the self-similar measure $\Lambda^*$ on the space $\calM^*$. It is convenient to introduce a closely related measure $\Lambda$, now on the space $\calM$, which
is given by the push forward of $\Lambda^*$ by the map $\calM^*\to \calM$, $\x\mapsto \x=\delta_0 \sqcup \x$ that adds an atom at $0$ to $\x$.
Plainly $\Lambda$ is also self-similar and obviously determines $\Lambda^*$. One calls  $\Lambda$ the L\'evy measure of $\S$ as it  bears the same relation to $\S$ viewed as a branching L\'evy process as the classical L\'evy measure to a stable subordinator; see \cite{BerMal}.

 We shall now recall a trimming transformation,  which was introduced more generally  in 
 \cite{BerMal}  for so-called branching L\'evy processes under the name censoring, and  which allows us to represent $(\S(t))_{t\geq 0}$ as the increasing limit of a sequence of Uchiyama branching processes.
 Trimming is better understood when one recalls the interpretation of the construction of $\S(t)$ as a general branching process that we sketched at the beginning of this section. Fix some threshold $b>0$ and write $\W^{[b]}(1)$ for the point process  obtained from $\W(1)$ by restriction to the strip $\R_+\times [0,b]$; in other words, we delete all the atoms $(t,x)$ with $x>b$. Just as above, we see $\W^{[b]}(1)$ as the first generation of a branching random walk $(\W^{[b]}(n))_{n\geq 0}$ on $\R_+^2$, and define
 \begin{equation} \label{def_S(b)}
\crochet{\S^{[b]}(t), g}= \sum_{n=0}^{\infty}\crochet{\W^{[b]}(n), g_t}. \end{equation}
 In words, the trimmed process  $(\S^{[b]}(t))_{t\geq 0}$ is obtained from $\S$  by killing at every birth event every child born at distance greater than $b$ from its parent, of course together with its descent.

We write $\x\mapsto \x^{[b]}=\mathbf 1_{[0,b]}\x$ for  the cut-off map\footnote{Beware that the trimmed process $\S^{[b]}$ is not the image of $\S$ by the cut-off map; the latter would rather be denoted by $(\S(t)^{[b]})_{t\geq 0}$.}
from $\calM$ to the space of finite counting measures $\Mf$ on $[0,\infty)$, and  $\Lambda^{[b]}$ for the image of the L\'evy measure  $\Lambda$ restricted to point processes $\x\in \calM$ having two or more atoms on $[0,b]$ (recall that by construction $\x$
has exactly one  atom at $0$, $\Lambda$-almost everywhere) by this  map. Note that for $\x\in \calM^*$, $\x^{[b]}=\varnothing$ is the zero point mass  if and only if $x_1>b$, and therefore
$$\Lambda^{[b]}(\calM)= \Lambda^*(\x\in \calM^*: x_1\leq b)= \alpha^{-1}b^{\alpha} \boldsymbol{\lambda}(\calM^1)< \infty.$$
Hence $\Lambda^{[b]}$ is a finite measure on $\Mf$; it can  be viewed as the reproduction rate of some 
 Uchiyama branching process. We also stress that $\Lambda^{[b]}$ gives no mass to the zero point process $\varnothing$ and is carried by the subspace of finite counting measures having a single atom at $0$.

 \begin{lemma} \label{L5} The  branching stable process trimmed at threshold $b$,
 $\S^{[b]}=(\S^{[b]}(t))_{t\geq 0}$, of a branching stable process  $\S$ with L\'evy measure $\Lambda$,  is a Uchiyama branching process on $\R_+$ with reproduction rate $\Lambda^{[b]}$. Moreover, the latter fulfills \eqref{E:finiteint}. \end{lemma}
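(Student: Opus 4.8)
The plan is to proceed in two steps. First I would verify that the finite measure $\Lambda^{[b]}$ satisfies \eqref{E:finiteint}, so that Lemma \ref{L1} applies and produces a genuine Uchiyama branching process $\U$ on $\R_+$ with reproduction rate $\Lambda^{[b]}$ (Feller on $\Mf$, with full domain $\calC_0(\Mf)$ and generator $\mathcal A$). Then I would show that the construction of the trimmed process $\S^{[b]}$ produces exactly these dynamics, which identifies the law of $\S^{[b]}$ with that of $\U$.

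For the first step, recall that $\Lambda$ is carried by counting measures having a single atom at $0$, and the cut-off $\x\mapsto\x^{[b]}=\mathbf 1_{[0,b]}\x$ retains that atom, so $\crochet{\x,\mathbf 1}\ge1$ for $\Lambda^{[b]}$-a.e.\ $\x$ and the first condition in \eqref{E:finiteint} holds. Since $\Lambda^{[b]}$ is the image of $\Lambda^*$ restricted to $\{x_1\le b\}$ under $\x\mapsto\delta_0\sqcup(\mathbf 1_{(0,b]}\x)$, one has $\crochet{\x^{[b]},\mathbf 1}=1+\crochet{\x,\mathbf 1_{(0,b]}}$, so
\begin{equation*}
\int_{\Mf}\crochet{\x,\mathbf 1}\,\Lambda^{[b]}(\dd \x)=\Lambda^*(x_1\le b)+\int_{\{x_1\le b\}}\crochet{\x,\mathbf 1_{(0,b]}}\,\Lambda^*(\dd \x).
\end{equation*}
The first term is $\alpha^{-1}b^{\alpha}\boldsymbol\lambda(\calM^1)$, as already observed. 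For the second I would pass to the polar representation \eqref{defLambda}: writing $\x=r\y$ with $r=x_1$ and $\y\in\calM^1$, every atom of $\y$ satisfies $y_j\ge y_1=1$, the condition $x_1\le b$ reads $r\le b$, and $\crochet{r\y,\mathbf 1_{(0,b]}}=\#\{j:y_j\le b/r\}$; hence, by \eqref{defLambda}, Tonelli and $b/y_j\le b$,
\begin{align*}
\int_{\{x_1\le b\}}\crochet{\x,\mathbf 1_{(0,b]}}\,\Lambda^*(\dd \x)
&=\int_{\calM^1}\sum_{j\ge 1}\int_0^{b/y_j}r^{\alpha-1}\,\dd r\,\boldsymbol\lambda(\dd \y)\\
&=\frac{b^{\alpha}}{\alpha}\int_{\calM^1}\crochet{\y,\bullet^{-\alpha}}\,\boldsymbol\lambda(\dd \y)<\infty
\end{align*}
by \eqref{condlambda}. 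So \eqref{E:finiteint} holds; I write $r_b:=\Lambda^{[b]}(\Mf)=\alpha^{-1}b^{\alpha}\boldsymbol\lambda(\calM^1)$ and $\Pi:=r_b^{-1}\Lambda^{[b]}$.

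For the second step, note that only the atoms $(t,\x)$ of the Poisson point process $\NN$ with $x_1(\x)\le b$ survive the trimming; since $\Lambda^*(x_1\le b)=r_b<\infty$, listing them by increasing first coordinate as $(T_1,\X_1),(T_2,\X_2),\dots$, the increments $T_i-T_{i-1}$ (with $T_0:=0$) are i.i.d.\ $\mathrm{Exp}(r_b)$, the $\X_i$ are i.i.d.\ with law $r_b^{-1}\Lambda^*(\,\cdot\cap\{x_1\le b\})$, and the two sequences are independent. Unwinding the definition \eqref{def_S(b)} of $\S^{[b]}$ together with the branching property of the branching random walk $(\W^{[b]}(n))_{n\ge0}$, and using that the first coordinate of an atom of $\W^{[b]}(n)$ is the sum of the first coordinates along its ancestral line, I would obtain, for the version started from $\delta_0$, the pathwise identity
\begin{equation*}
\S^{[b]}(t)=\delta_0+\sum_{i\,:\,T_i\le t}\ \sum_{j\,:\,X_{i,j}\le b}\big(X_{i,j}+\S^{[b],(i,j)}(t-T_i)\big),\qquad t\ge0,
\end{equation*}
the $\S^{[b],(i,j)}$ being i.i.d.\ copies of $\S^{[b]}$ independent of $(T_i)$ and $(\X_i)$. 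Isolating $T_1\sim\mathrm{Exp}(r_b)$ and, by the lack of memory of $\NN$ in its time variable, regrouping the ancestor's later birth events together with their whole descent into a fresh independent copy of $\S^{[b]}$, this rewrites as $\S^{[b]}(t)=\delta_0$ for $t<T_1$ and, for $t\ge T_1$,
\begin{equation*}
\S^{[b]}(t)\ \egaldistr\ \bigsqcup_{\ell=1}^{m}\big(z_\ell+\S^{[b],\ell}(t-T_1)\big),
\end{equation*}
where $\Y=\sum_{\ell=1}^m\delta_{z_\ell}$, with $z_1=0$ and $z_2,\dots,z_m$ the atoms in $(0,b]$ of $\X_1$, has law $\Pi$ and is independent of $T_1$, and the $\S^{[b],\ell}$ are i.i.d.\ copies of $\S^{[b]}$ independent of $(T_1,\Y)$. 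This is exactly the decomposition at its first jump of a Uchiyama branching process with reproduction rate $\Lambda^{[b]}=r_b\cdot\Pi$ started from $\delta_0$, obtained by combining the strong Markov property of $\U$ at its (exponential, rate $r_b$) first jump time with the branching property of $\U$ recalled before Lemma \ref{L1}. Since \eqref{E:finiteint} rules out explosion, iterating this identity over the a.s.\ finitely many jumps of either process in any bounded time interval shows it pins down the law on $\calD(\Mf)$ uniquely; hence $\S^{[b]}$ and $\U$ share the same law. (Equivalently: the above exhibits $\S^{[b]}$ as a Markov process on $\Mf$ with the jump rates and jump kernel encoded in $\mathcal A$, and the Feller property from Lemma \ref{L1} then forces the two laws to coincide.)

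The routine part is the computation in the first step. The main obstacle is the identification in the second one: one must faithfully translate the description of $\S^{[b]}$ as a static branching random walk on $\R_+^2$ — with the first coordinate acting as accumulated age — into the description of a Uchiyama process as a pure-jump Markov process on $\Mf$. The delicate point is the restart argument, namely that the ancestor's birth events after time $T_1$, together with all their descendants, reassemble into an independent copy of $\S^{[b]}$ run for the remaining time; this relies on the memorylessness of $\NN$ in time and on the branching property of $(\W^{[b]}(n))_{n\ge0}$ being preserved by the trimming operation.
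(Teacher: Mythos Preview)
Your proof is correct and follows essentially the same route as the paper: both arguments identify the first birth time of the ancestor in the trimmed process as an exponential variable with rate $r_b=\alpha^{-1}b^{\alpha}\boldsymbol{\lambda}(\calM^1)$, use the memorylessness of the Poisson point process $\NN$ to restart an independent copy at that time (the paper phrases this as ``kill the ancestor at $\tau^{[b]}$ and add a child at $0$''), and conclude that $\S^{[b]}$ has the Uchiyama dynamics with reproduction rate $\Lambda^{[b]}$. The only minor difference is in verifying \eqref{E:finiteint}: you compute $\int_{\Mf}\crochet{\x,\mathbf 1}\,\Lambda^{[b]}(\dd\x)$ explicitly via the polar representation, whereas the paper uses the one-line Markov-type bound $\crochet{\x^{[b]},\mathbf 1}\le b^{\alpha}\crochet{\x,\bullet^{-\alpha}}$ together with \eqref{condlambda}.
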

\begin{proof} The statement is closely related to Section 5.2 in \cite{BerMal}; we shall nonetheless present an independent proof.
The trimmed process $\S^{[b]}$ is a general branching process with progeny described by the cut-off version $\NN^{[b]}$ of the Poisson point process $\NN$. By the image property of Poisson point processes, the latter is a Poisson point process on
$\R_+\times \Mf$ with intensity $\dd t\otimes \Lambda^{*[b]}(\dd \x)$, where $\Lambda^{*[b]}$ denotes
the push forward measure of $\Lambda^*$ restricted to $\{ \x\in \calM^*: x_1\leq b\}$ by the cut-off map $\x \mapsto  \x^{[b]}$. 

As a consequence, the first instant at which the ancestor gives birth, 
$$\tau^{[b]}=\inf\{t>0: \NN^{[b]}([0,t]\times \Mf)>0\},$$ has the exponential distribution with parameter $\alpha^{-1}b^{\alpha} \boldsymbol{\lambda}(\calM^1)$
and its offspring at that time has the normalized  law $\alpha b^{-\alpha} \boldsymbol{\lambda}(\calM^1)^{-1}\Lambda^{*[b]}$, independently of $\tau^{[b]}$. Moreover, the point process of the progeny shifted at time  is again a Poisson point process with intensity $\dd t\otimes \Lambda^{*[b]}(\dd \x)$, and is independent of the preceding quantities.

Imagine now that we decide to kill the ancestor at time $\tau^{[b]}$ and simultaneously add a child located at $0$ to its progeny.
Since the new child is born at the same location as the ancestor and precisely at the time when the ancestor is killed, this does not affect what so ever the process itself. This  should make clear the claim that $\S^{[b]}$ evolves like a Uchiyama branching process with reproduction rate given 
the image of $\Lambda^{*[b]}$ by the map $\x\mapsto \delta_0\sqcup \x$ which adds an atom at $0$ to the finite counting measure $\x$. The latter is precisely $\Lambda^{[b]}$. 

Finally, the cut-off $\x \mapsto \x^{[b]} $ preserves atoms at $0$, so $\Lambda^{[b]}$ verifies the first assertion of \eqref{E:finiteint}. The second assertion is plain from 
the  Markov inequality $\crochet{\x^{[b]},\mathbf 1}\leq b^{\alpha}  \crochet{\x, \bullet^{-\alpha}}$ and \eqref{condlambda}.
\end{proof}

\section{A branching-stable limit theorem}
The purpose of this section is to establish our main result which has been presented only informally in the Introduction; let us briefly recall the setting. 
We consider a branching random walk $\Z=(\Z(n))_{n\geq 0}$ on $\R_+$ started from $\Z(0)=\delta_0$. We suppose that $\Z(1)$ has a single atom at the origin a.s. and write 
$$0< X_1\leq X_2 \leq \ldots \leq \infty$$
for the ordered sequence of positive locations of atoms of $\Z(1)$ repeated according to their multiplicities, and then set
$$\Z^*(1)= \Z(1)-\delta_0=\sum_{j\geq 1} \delta_{X_j}.$$
 We shall also need the following basic fact.
\begin{lemma} \label{L4} Assume \eqref{E:C1}, \eqref{E:C2} and \eqref{E:C3}. Then
the limit distribution in  \eqref{E:C2} is the law of a point process that can be expressed in the form
$V^{1/\alpha}\mathbf Y$, with $\mathbf Y$ a point process in $\calM^1$
 and $V$ a uniform random variable on $(0,1)$ independent of $\mathbf Y$. 
Furthermore, the distribution $\boldsymbol{\rho}$ of $\mathbf Y$ satisfies \eqref{condlambda}. 
\end{lemma}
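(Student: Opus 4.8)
The plan is to study the limit law of \eqref{E:C2}, say $\boldsymbol{\mu}$ (a probability measure on $\calM$), through the ``polar coordinates'' $\x = x_1\,\pi(\x)$, where on $\{\x\in\calM\colon 0<x_1<\infty\}$ the angular projection is $\pi(\x)\coloneqq x_1^{-1}\x\in\calM^1$; I write $Q_s$ for the law of $s^{-1}\Z^*(1)$ conditioned on $\{X_1\le s\}$, so that $Q_s\Rightarrow\boldsymbol{\mu}$ as $s\to0+$ and each $Q_s$ is carried by $\{x_1\le 1\}$. First I would pin down the radius: the maps $\x\mapsto\crochet{\x,\indset{[0,u]}}$ and $\x\mapsto\crochet{\x,\indset{[0,u)}}$ are, respectively, vaguely upper and lower semicontinuous (the sets $[0,u]$ and $[0,u)$ being compact, resp. open), so $\{x_1\le u\}$ is closed and $\{x_1<u\}$ is open; combining this with $Q_s(x_1\le u)=F_1(us)/F_1(s)\to u^\alpha$ (from \eqref{E:C1}) and the portmanteau theorem gives $\boldsymbol{\mu}(x_1\le u)\ge u^\alpha$ and $\boldsymbol{\mu}(x_1<u)\le u^\alpha$, hence $\boldsymbol{\mu}(x_1\le u)=u^\alpha$ for all $u\in(0,1)$. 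So $\boldsymbol{\mu}$ is carried by $\{\x\in\calM^*\colon 0<x_1<1\}$ and, under $\boldsymbol{\mu}$, the radius $x_1$ is distributed as $V^{1/\alpha}$ with $V$ uniform on $(0,1)$.

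Next I would establish the independence of the radius and the angle under $\boldsymbol{\mu}$. Fixing $c\in(0,1)$ and splitting $\{X_1\le s\}$ into $\{X_1\le cs\}$ and $\{cs<X_1\le s\}$, using the identity of counting measures $s^{-1}\Z^*(1)=c\cdot\big((cs)^{-1}\Z^*(1)\big)$, gives
$$Q_s=\frac{F_1(cs)}{F_1(s)}\,(c\,Q_{cs})+\Big(1-\frac{F_1(cs)}{F_1(s)}\Big)\,R^c_s,$$
where $c\,Q_{cs}$ is the push-forward of $Q_{cs}$ under $\x\mapsto c\x$ and $R^c_s$ is the law of $s^{-1}\Z^*(1)$ given $cs<X_1\le s$. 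Letting $s\to0+$: the coefficient tends to $c^\alpha$; $c\,Q_{cs}\Rightarrow c\,\boldsymbol{\mu}$ (the push-forward of $\boldsymbol{\mu}$ under $\x\mapsto c\x$, by \eqref{E:C2} and continuity of the dilation); and $R^c_s$, which is tight because $R^c_s\le(1-F_1(cs)/F_1(s))^{-1}Q_s$, converges to a probability measure $\boldsymbol{\nu}_c$ with $\boldsymbol{\mu}=c^\alpha(c\,\boldsymbol{\mu})+(1-c^\alpha)\boldsymbol{\nu}_c$. Since $c\,\boldsymbol{\mu}$ is carried by $\{x_1<c\}$ and $\boldsymbol{\nu}_c$ by the closed set $\{x_1\ge c\}$, restricting this identity to the level sets of $x_1$ (and using $\boldsymbol{\mu}(x_1<c)=c^\alpha$, $\boldsymbol{\mu}(x_1=c)=0$ from the previous step) identifies $c\,\boldsymbol{\mu}=\boldsymbol{\mu}(\cdot\mid x_1<c)$ and $\boldsymbol{\nu}_c=\boldsymbol{\mu}(\cdot\mid x_1>c)$. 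Applying $\pi$ to the identity — and using $\pi(c\x)=\pi(\x)$ — yields $\boldsymbol{\rho}=c^\alpha\boldsymbol{\rho}+(1-c^\alpha)\pi_*\boldsymbol{\nu}_c$ with $\boldsymbol{\rho}\coloneqq\pi_*\boldsymbol{\mu}$, hence $\pi_*\boldsymbol{\nu}_c=\boldsymbol{\rho}$; in other words the $\boldsymbol{\mu}$-conditional law of $\pi(\x)$ given $\{x_1>c\}$ is $\boldsymbol{\rho}$, for every $c\in(0,1)$. Subtracting these for $0<a<b<1$ gives the same statement for the conditioning $\{x_1\in(a,b]\}$, and a monotone-class argument upgrades it to $\boldsymbol{\mu}(\pi(\x)\in A,\ x_1\in B)=\boldsymbol{\rho}(A)\,\boldsymbol{\mu}(x_1\in B)$ for all Borel $A,B$; that is, $\pi(\x)$ and $x_1$ are $\boldsymbol{\mu}$-independent. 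Taking $V\coloneqq x_1^\alpha$ (uniform on $(0,1)$, independent of $\mathbf Y\coloneqq\pi(\x)$, which has law $\boldsymbol{\rho}$) then gives $\x=x_1\,\pi(\x)\egaldistr V^{1/\alpha}\mathbf Y$, the first assertion.

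For the integrability \eqref{condlambda}: since $\x\mapsto\crochet{\x,\e^{-\bullet}}$ is nonnegative and vaguely lower semicontinuous, the portmanteau theorem gives
$$\E_{\boldsymbol{\mu}}\big[\crochet{\x,\e^{-\bullet}}\big]\le\liminf_{n\to\infty}\E\big(\crochet{a_n^{-1}\Z^*(1),\e^{-\bullet}}\mid X_1\le a_n\big)\le\liminf_{n\to\infty}\frac{\exp(\psi(1/a_n))-1}{F_1(a_n)},$$
where I used $\Z(1)=\delta_0+\Z^*(1)$, so $\E(\crochet{\Z^*(1),\e^{-t\bullet}})=\exp(\psi(t))-1$; the right-hand side is finite by \eqref{E:C3} together with $nF_1(a_n)\to1$. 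On the other hand, by the independence just proved and Tonelli,
$$\E_{\boldsymbol{\mu}}\big[\crochet{\x,\e^{-\bullet}}\big]=\E_{\boldsymbol{\rho}}\Big[\sum_{j\ge1}\int_0^1\e^{-v^{1/\alpha}Y_j}\,\dd v\Big],$$
and since $\int_0^1\e^{-v^{1/\alpha}y}\,\dd v=\alpha\,y^{-\alpha}\int_0^y s^{\alpha-1}\e^{-s}\,\dd s\ge\big(\alpha\int_0^1 s^{\alpha-1}\e^{-s}\,\dd s\big)\,y^{-\alpha}$ for every $y\ge1$, while every atom of a point process in $\calM^1$ is $\ge1$, it follows that $\int_{\calM^1}\crochet{\y,\bullet^{-\alpha}}\,\boldsymbol{\rho}(\dd\y)<\infty$, which is \eqref{condlambda}.

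The main obstacle is the independence step: neither \eqref{E:C1} nor \eqref{E:C2} asserts a priori that the limiting angular distribution is the same at every scale of the smallest atom, so this self-similarity of $\boldsymbol{\mu}$ has to be extracted from the scaling consistency of the conditionings — the displayed splitting identity — before it can be turned into genuine independence. The remaining ingredients (vague semicontinuity of $\x\mapsto\crochet{\x,\indset{B}}$, tightness of the family $(R^c_s)$, that $\boldsymbol{\mu}$ charges no level set $\{x_1=c\}$, and measurability of $\pi$) are routine but need to be put in place.
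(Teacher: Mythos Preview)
Your proof is correct and follows essentially the same route as the paper: both extract the product form $V^{1/\alpha}\mathbf Y$ from the scaling consistency of the conditionings (the paper by computing $\E(f(\mathbf Y)\ind{V\le u^{\alpha}})$ via the identity $\P(X_1\le ut\mid X_1\le t)\to u^{\alpha}$, you via the equivalent measure identity $\boldsymbol{\mu}=c^{\alpha}(c\boldsymbol{\mu})+(1-c^{\alpha})\boldsymbol{\nu}_c$), and both obtain \eqref{condlambda} from \eqref{E:C3} by Fatou/lower-semicontinuity and the lower bound $\int_0^1\e^{-v^{1/\alpha}y}\dd v\gtrsim y^{-\alpha}$. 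The only noteworthy difference is packaging: the paper invokes the continuous mapping theorem for $\x\mapsto(x_1,x_1^{-1}\x)$ and cites folklore, whereas you work through portmanteau/semicontinuity explicitly, which is slightly longer but avoids having to justify continuity of the polar map in the vague topology.
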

\begin{proof} The first part of the claim belongs to the folklore on multidimensional regular variation; see for instance Chapter 5 in  \cite{Resnick}. For the reader's convenience, we recall the argument. 

Denote the point process arising as weak limit in  \eqref{E:C2} by $\mathbf L$, and write $L_1$ for the location of its left-most atom.
We deduce immediately from \eqref{E:C1} that $V=L_1^{\alpha}$ has the uniform distribution on $[0,1]$, and $\mathbf Y= L_1^{-1} \mathbf L$
is a point process in $\calM^1$. So $\mathbf L= V^{1/\alpha} \mathbf Y$ and we shall now argue that $V$ and $\mathbf Y$ are independent.

We get from the continuous mapping theorem and \eqref{E:C2} that as $t\to 0+$, the conditional distribution of the pair
$$\left( X_1/t, X_1^{-1} \Z^*(1)\right)= \left( X_1/t, (t/X_1)(t^{-1} \Z^*(1))\right) $$ 
conditioned on $X_1\leq t$ converges weakly to that of $(V^{1/\alpha},\mathbf Y)$. For  any continuous and bounded function $f: \calM^1 \to \R$ and $u\in(0,1]$, one has 
$$\lim_{t\to 0+} \E\left( f(X_1^{-1} \Z^*(1)) \ind{X_1/t\leq u} \mid {X_1\leq t}\right) = \E\left( f(\mathbf Y)\ind{V\leq u^{\alpha}}\right).$$
We can also write
\begin{align*}&\E\left( f(X_1^{-1} \Z^*(1))  \ind{X_1/t\leq u} \mid {X_1\leq t}\right)\\
&= \E\left( f(X_1^{-1} \Z^*(1))  \ind{X_1\leq ut} \mid {X_1\leq ut}\right)  \P\left( X_1\leq ut \mid {X_1\leq t}\right).
\end{align*}
So letting $t\to 0$, we get 
$$\E\left( f( \mathbf Y) \ind{V\leq u^{\alpha}}\right)= \E\left( f(\mathbf Y) \right) u^{\alpha},$$
which shows that $V$ and $\mathbf Y$ are independent.

Finally, recall that  $(a_n)_{n\geq 1}$ is a sequence of positive real numbers such that $\P(X_1\leq a_n)\sim 1/n$ and that $\psi$ is the log-Laplace transform of the intensity measure of $\Z(1)$. We deduce  from  \eqref{E:C3} and the bound
$$ \E\left(\sum_{j=1}^{\infty} \exp(-a_n^{-1} X_j) \mid X_1\leq a_n \right) \leq \frac{1}{\P(X_1\leq a_n)} \left( \e^{\psi(1/a_n)}-1\right),
$$
that 
$$\limsup_{n\to \infty} \sum_{j=1}^{\infty} \E\left(\exp(-a_n^{-1} X_j) \mid X_1\leq a_n \right) <\infty.$$
We write $\mathbf Y=\sum_{j\geq 1} \delta_{Y_j}$, and recall  from above that for every $j\geq 1$, the conditional law of $a_n^{-1} X_j$ given $X_1\leq a_n$ converges weakly to that of $Y_j$ on $[1,\infty]$. Fatou's lemma now entails that
$$ \alpha \sum_{j=1}^{\infty} \E\left(\int_0^1 \e^{-t Y_j} t^{\alpha-1} \dd t\right)= \E\left( \sum_{j=1}^{\infty} \exp(-V^{1/\alpha} Y_j) \right)< \infty.$$
Since there is some  $c_{\alpha}>0$ such that 
$$ c_{\alpha} y^{-\alpha} \leq \int_0^1 \e^{-t y} t^{\alpha-1} \dd t \quad\text{ for all  }y\geq 1,$$ 
our last claim follows. 
\end{proof}

Next, we introduce for every $r>0$ the space $\mathcal M_{r,f}$ of counting measures $\x$ on $\R_+$
such that $\crochet{\x, \e^{-r\bullet}}<\infty$. We associate to each $\x\in \mathcal M_{r,f}$
the finite measure $m_{r,\x}$ on $\R_+$ which has the density $\e^{-r\bullet}$ with respect to $\x$.
In words, assuming for simplicity that the counting measure $\x$ is simple, $m_{r,\x}$ is a purely atomic measure, the locations of its atoms are the same as for $\x$, and the mass of an atom at $x$ is $\e^{-r x}$. We then define $d_r(\x,\y)$ for $\x,\y\in \mathcal M_{r,f}$ as the L\'evy-Prokhorov distance between 
$m_{r,\x}$ and $m_{r,\y}$; this makes of $\mathcal M_{r,f}$ a locally compact metric space.
We write $\calD(\mathcal M_{r,f})$ for the space of rcll functions $\omega: \R_+\to \mathcal M_{r,f}$, endowed with the Skorohod $J_1$ topology. 

We may now state rigorously the main result of this work:
\begin{theorem}\label{T1} Assume \eqref{E:C1}, \eqref{E:C2} and \eqref{E:C3}, and 
let $\S$ be a branching stable process with L\'evy measure $\Lambda$,
such that $\Lambda^*$ given by \eqref{defLambda}
 for  $\boldsymbol{\lambda}= \alpha \cdot \boldsymbol{\rho }$ and 
$\boldsymbol{\rho}$ the probability measure on $\calM^1$ that arises in Lemma \ref{L4}. 

Then for every $r>0$, we have
$$\lim_{n\to \infty} \left(a_n^{-1}\Z(\lfloor tn \rfloor)\right)_{t\geq 0} = (\S(t))_{t\geq 0}$$
in the sense of  weak convergence on $\calD(\mathcal M_{r,f})$.
\end{theorem}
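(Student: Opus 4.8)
The plan is to reduce Theorem \ref{T1} to Lemma \ref{L2} via the trimming/truncation device of Lemma \ref{L5}, by approximating both the branching random walk and the branching-stable process by their versions trimmed at a threshold $b>0$, and then letting $b\to\infty$. Concretely, for each $b>0$ I would introduce the trimmed branching random walk $\Z^{[b]}=(\Z^{[b]}(n))_{n\geq0}$ obtained from $\Z$ by killing, at every birth event, every child born at distance greater than $a_n b$ from its parent, together with its descent; and on the limit side, $\S^{[b]}$ is the branching-stable process trimmed at threshold $b$, which by Lemma \ref{L5} is a Uchiyama branching process on $\R_+$ with reproduction rate $\Lambda^{[b]}$ satisfying \eqref{E:finiteint}. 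The first main step is thus to verify that the rescaled trimmed walks $a_n^{-1}\Z^{[b]}(\lfloor tn\rfloor)$ satisfy the three hypotheses of Lemma \ref{L2} (with $r=r_b:=\alpha^{-1}b^{\alpha}$, since $\Lambda^{[b]}(\calM)=\alpha^{-1}b^{\alpha}\boldsymbol{\rho}(\calM^1)=\alpha^{-1}b^{\alpha}$ as $\boldsymbol\lambda=\alpha\cdot\boldsymbol\rho$ and $\boldsymbol\rho$ is a probability measure), and $\Pi=\Pi_b:=r_b^{-1}\Lambda^{[b]}$. Hypothesis (1) of Lemma \ref{L2} is immediate since $\Z^{[b]}(1)$ always retains the atom at $0$ and has finite expected mass (using \eqref{E:C3}, as in the proof of Lemma \ref{L4}). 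Hypothesis (2) reads $\P(a_n^{-1}\Z^{[b]}(1)\neq\delta_0)=\P(X_1\leq a_n b)\sim r_b/n$, which follows from $nF_1(a_n)\to1$ and the regular variation \eqref{E:C1}: $\P(X_1\leq a_n b)=F_1(a_n b)\sim b^{\alpha}F_1(a_n)\sim b^{\alpha}/n=\alpha r_b/n$ — here I should be careful about the normalization constant, and in fact the correct reproduction rate is $r_b=b^\alpha/\alpha$ precisely so that this matches $\Lambda^{[b]}(\calM)$. Hypothesis (3), that the conditional law of $a_n^{-1}\Z^{[b]}(1)$ given $a_n^{-1}\Z^{[b]}(1)\neq\delta_0$ converges weakly on $\Mf$ to $\Pi_b$, is the crux of this step: by Lemma \ref{L4} the conditional law of $a_n^{-1}\Z^*(1)$ given $X_1\leq a_n$ converges to the law of $V^{1/\alpha}\mathbf Y$, and conditioning further on $X_1\leq a_n b$ and applying the cut-off map at level $b$ should produce exactly the normalized restriction of $\Lambda^{*[b]}$, i.e. $\Pi_b$; one must check that the cut-off map $\x\mapsto\x^{[b]}$ is a.s. continuous at the limiting random measure (no atom exactly at $b$, which holds by the regular-variation structure) so that the continuous mapping theorem applies, and that conditioning on the smaller event $\{X_1\le a_nb\}$ versus $\{X_1\le a_n\}$ is a harmless reweighting by a scaling-invariant factor, just as in the independence argument inside the proof of Lemma \ref{L4}.

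Granting this, Lemma \ref{L2} yields, for each fixed $b>0$, the weak convergence $(a_n^{-1}\Z^{[b]}(\lfloor tn\rfloor))_{t\geq0}\to(\S^{[b]}(t))_{t\geq0}$ on $\calD(\Mf)$, hence a fortiori on $\calD(\mathcal M_{r,f})$ since $\Mf\hookrightarrow\mathcal M_{r,f}$ continuously. The second main step is the passage $b\to\infty$. On the limit side one needs that $\S^{[b]}\uparrow\S$ as $b\to\infty$, in the appropriate sense on $\calD(\mathcal M_{r,f})$: this is essentially by construction, since $\W^{[b]}(n)\uparrow\W(n)$ for every $n$ and every sample path, and the definitions \eqref{def_S} and \eqref{def_S(b)} then give $\crochet{\S^{[b]}(t),g}\uparrow\crochet{\S(t),g}$ for bounded continuous $g$; one should confirm that $\S(t)\in\mathcal M_{r,f}$ a.s. for every $r>0$ (i.e. $\crochet{\S(t),\e^{-r\bullet}}<\infty$), which is where the choice of the metrics $d_r$ pays off and which should follow from a first-moment computation using self-similarity of $\Lambda^*$ and \eqref{condlambda}/Lemma \ref{L4}. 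On the discrete side the key estimate is a tightness-type control: one must bound, uniformly in $n$, the $d_r$-distance between $a_n^{-1}\Z(\lfloor tn\rfloor)$ and its trimmed version $a_n^{-1}\Z^{[b]}(\lfloor tn\rfloor)$, showing it is small (over $t\in[0,T]$) with high probability when $b$ is large. The natural way is a first-moment bound: the expected $\crochet{\cdot,\e^{-r\bullet}}$-mass of the atoms that are discarded by trimming at $a_nb$, in generations $\leq Tn$, is controlled by $n\psi$-type quantities restricted to displacements exceeding $a_nb$, and \eqref{E:C3} together with \eqref{E:C1} should make this $O(b^{-\gamma})$ uniformly in $n$ for some $\gamma>0$; summing the geometric contribution over generations (the branching adds at most a factor controlled again by \eqref{E:C3}) keeps it finite.

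The final step is a standard "convergence together with a uniformly-small perturbation implies convergence" argument on the Polish space $\calD(\mathcal M_{r,f})$: for a bounded continuous functional $\Phi$ on $\calD([0,T],\mathcal M_{r,f})$, write
\begin{align*}
\big|\E\Phi\big((a_n^{-1}\Z(\lfloor tn\rfloor))_{0\le t\le T}\big)-\E\Phi\big((\S(t))_{0\le t\le T}\big)\big|
&\le \big|\E\Phi\big(a_n^{-1}\Z\big)-\E\Phi\big(a_n^{-1}\Z^{[b]}\big)\big|\\
&\quad+\big|\E\Phi\big(a_n^{-1}\Z^{[b]}\big)-\E\Phi\big(\S^{[b]}\big)\big|\\
&\quad+\big|\E\Phi\big(\S^{[b]}\big)-\E\Phi\big(\S\big)\big|,
\end{align*}
where I abbreviate the restricted paths; then choose $b$ large to make the first and third terms small (uniformly in $n$, by the trimming estimate and by $\S^{[b]}\to\S$), and for that fixed $b$ send $n\to\infty$ to kill the middle term by Lemma \ref{L2}. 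One should use the uniform continuity of $\Phi$ on compacts together with a tightness statement for the family $\{a_n^{-1}\Z(\lfloor tn\rfloor)\}_n$ in $\calD(\mathcal M_{r,f})$, which itself follows from the already-established convergence of the trimmed versions plus the smallness of the perturbation. I expect the main obstacle to be precisely the uniform-in-$n$ control of the trimming error in the $d_r$ metric — i.e. showing that the discarded atoms (children born far from their parent, and all their descendants over $Tn$ generations) carry negligible $\e^{-r\bullet}$-mass — since this is where assumption \eqref{E:C3} must be leveraged quantitatively rather than just qualitatively, and where one has to make sure the branching does not amplify the rare large displacements; a secondary technical point is checking the a.s. continuity of the cut-off map at the limiting random measure so that hypothesis (3) of Lemma \ref{L2} is genuinely verified for the trimmed walks.
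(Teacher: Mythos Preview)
Your strategy is exactly that of the paper: prove convergence of the trimmed versions via Lemma~\ref{L2} (this is Lemma~\ref{L6}/Corollary~\ref{C1}), show the trimming error is small using a first-moment bound driven by \eqref{E:C3}, and combine. The paper packages the last step as a diagonal argument (choosing $b_n\to\infty$ slowly) rather than your three-term $\varepsilon$ splitting, but these are interchangeable here.

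Two small points. First, your normalization is off: with $\boldsymbol\lambda=\alpha\cdot\boldsymbol\rho$ one has $\Lambda^{[b]}(\calM)=\alpha^{-1}b^{\alpha}\boldsymbol\lambda(\calM^1)=b^{\alpha}$, matching $\P(X_1\le a_nb)\sim b^{\alpha}/n$; so $r_b=b^{\alpha}$, not $b^{\alpha}/\alpha$. Second, for the trimming error the paper uses a cleaner bound than tracking discarded descendants: since displacements are nonnegative, $\mathbf 1_{[0,b]}\,a_n^{-1}\Z(k)\le \Z^{[n,b]}(k)$, so the difference is dominated by $\mathbf 1_{(b,\infty)}\,a_n^{-1}\Z(k)$; then for any $0<\tilde r<r$,
\[
\crochet{a_n^{-1}\Z(\lfloor nt\rfloor),\mathbf 1_{(b,\infty)}\e^{-r\bullet}}\le \e^{-b(r-\tilde r)}\crochet{\Z(\lfloor nt\rfloor),\e^{-\tilde r a_n^{-1}\bullet}},
\]
whose expectation equals $\e^{-b(r-\tilde r)}\exp(\lfloor nt\rfloor\psi(\tilde r/a_n))$ and is bounded uniformly in $n$ by \eqref{E:C3}. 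This gives the uniform-in-$n$ smallness you need directly, with no genealogical bookkeeping.
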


\begin{remark}\label{R:1}  Assume \eqref{E:C1} holds; then we have  by a Tauberian theorem:
$$\E(\e^{-tX_1}) \sim  \Gamma(1+\alpha) F_1(1/t)\quad \text{as }t\to \infty.$$
Since $\E(\e^{-tX_j})\leq \E(\e^{-tX_1})$ for all $t> 0$ and all $j\geq 1$, we see that \eqref{E:C3} follows from \eqref{E:C1} whenever the total mass  of 
$\Z(1)$ is bounded, say $X_j=\infty$ a.s. whenever $j\geq k$.
 Indeed, we  then have  
$$\psi(t) \leq  \log(1+k\E(\e^{-tX_1}))\sim k \Gamma(1+\alpha) F_1(1/t).$$
Moreover, if  $\Z(1)$ has actually at most two atoms a.s., i.e. $k=1$ above, 
then \eqref{E:C2} also holds, and more precisely the limiting distribution there is that of the sequence $(V^{1/\alpha}, \infty, \infty, \ldots)$  with $V$ a uniform random  variable on $[0,1]$.
\end{remark}

Throughout the rest of this section, we assume without further mention that \eqref{E:C1}, \eqref{E:C2} and \eqref{E:C3} hold,  and we shall further use the notation in Lemma \ref{L4} and  Theorem \ref{T1}. 
 We first set some further notation relevant to the proof of Theorem \ref{T1}. 
 
 For every $n\geq 1$ and $b>0$, we introduce the rescaled branching random walk $\Z^{[n,b]}=\left(\Z^{[n,b]}(k)\right)_{k\geq 0}$ that results from $\Z$ by first rescaling with a factor $a_n^{-1}$ and then trimming at threshold $b$ (i.e. the children born at distance greater than $b$ from their parents are killed). In words, the first generation is given by
 $$\Z^{[n,b]}(1)=\left(a_n^{-1}\Z(1)\right)^{[b]}= \delta_0 + \sum_{j\geq 1} \mathbf 1_{[0,b]} \delta_{X_j/a_n}.$$
 We start by checking that for every fixed $b>0$, this sequence of branching random walks fulfills the assumptions of Lemma \ref{L2}. The first assumption there is straightforward and we focus on the second and third.
 
 \begin{lemma}\label{L6} We have:
 $$\P(\Z^{[n,b]}(1) \neq \delta_0) \sim b^{\alpha}/n \qquad \text{as }n\to \infty$$
 and, in the notation introduced above Lemma \ref{L5},
 $$\lim_{n\to \infty} \P(\Z^{[n,b]}(1)\in \cdot \mid \Z^{[n,b]}(1)\neq \delta_0)= \Pi^{[b]}(\cdot) $$ 
 in the sense of weak convergence for distributions on $\Mf$,
 where  $\Pi^{[b]}$ denotes the law of the finite point process 
 $$\delta_0\sqcup b(V^{1/\alpha} \mathbf Y)^{[1]} = \delta_0 + \sum_{j: V^{1/\alpha} Y_j\leq 1} \delta_{b V^{1/\alpha} Y_j}.$$
 
 \end{lemma}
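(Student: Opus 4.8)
The two assertions concern, respectively, the asymptotic probability that the rescaled-and-trimmed first generation is nontrivial, and the weak limit of its conditional law. For the first assertion, note that $\Z^{[n,b]}(1)\neq\delta_0$ means that after rescaling by $a_n^{-1}$ at least one atom $X_j/a_n$ survives the cut-off at $b$, i.e. $X_1\leq b\,a_n$; since $X_1$ is the left-most atom this is exactly $\{X_1/a_n\leq b\}$. By \eqref{E:C1}, $F_1$ is regularly varying at $0+$ with index $\alpha$, and by the defining relation $nF_1(a_n)\to 1$, we get
$$
\P(\Z^{[n,b]}(1)\neq\delta_0)=F_1(b\,a_n)\sim b^{\alpha}F_1(a_n)\sim b^{\alpha}/n,
$$
which is the first claim.

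For the second assertion, the plan is to condition on $X_1\leq ba_n$ and express $a_n^{-1}\Z^*(1)$ in terms of the rescaling at level $t=ba_n$. Write $\tilde\Z(t)$ for the law of $t^{-1}\Z^*(1)$ conditioned on $X_1\leq t$; by \eqref{E:C2} and Lemma \ref{L4}, $\tilde\Z(t)$ converges weakly on $\calM$ (vague topology) to $V^{1/\alpha}\mathbf Y$ as $t\to 0+$. Since $a_n\to 0$, taking $t=ba_n$ gives $(ba_n)^{-1}\Z^*(1)$ conditioned on $X_1\leq ba_n$ converging weakly to $V^{1/\alpha}\mathbf Y$; multiplying by the deterministic factor $b$ (a continuous operation on $\calM$), we get that the conditional law of $a_n^{-1}\Z^*(1)$ given $X_1\leq ba_n$ converges weakly on $\calM$ to $b\,(V^{1/\alpha}\mathbf Y)$. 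Now apply the cut-off map $\x\mapsto\x^{[1/b]}$... more precisely, $\Z^{[n,b]}(1)=\delta_0\sqcup(a_n^{-1}\Z^*(1))^{[b]}$, so we must push forward through $\x\mapsto\delta_0\sqcup\x^{[b]}$, which sends $b\,(V^{1/\alpha}\mathbf Y)$ to $\delta_0\sqcup\bigl(b(V^{1/\alpha}\mathbf Y)\bigr)^{[b]}=\delta_0\sqcup b(V^{1/\alpha}\mathbf Y)^{[1]}$, exactly the announced $\Pi^{[b]}$.

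The main obstacle is the discontinuity of the cut-off map: the map $\x\mapsto\x^{[b]}$ from $\calM$ (vague topology) to $\Mf$ (Lévy–Prokhorov/weak topology) is only continuous at those $\x$ that place no mass on the boundary $\{b\}$. I would handle this by the standard continuous-mapping-with-a.s.-continuity-points argument: since $V^{1/\alpha}Y_1=V^{1/\alpha}$ has a continuous distribution on $(0,1)$ and $\mathbf Y$ satisfies \eqref{condlambda} (so $\sum_j Y_j^{-\alpha}<\infty$ a.s., hence $\#\{j: Y_j\leq c\}<\infty$ a.s. for each $c$, and $\P(\exists j: V^{1/\alpha}Y_j=b/b=1)=0$ because each $V^{1/\alpha}Y_j$ has a continuous law or the event has probability zero by Fubini over $V$), the limiting point process $b(V^{1/\alpha}\mathbf Y)$ a.s. has no atom at the threshold $b$. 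Thus $b(V^{1/\alpha}\mathbf Y)$ is a.s. a continuity point of the cut-off, and the continuous mapping theorem applies. A minor additional point is to verify that $\x\mapsto\x^{[b]}$ does land in $\Mf$ on a set of full limiting measure and that the conditioning probabilities $\P(X_1\leq ba_n)$ are eventually positive so the conditional laws are well-defined — both are immediate from the first assertion. Finally, one should note for completeness that the two halves of the statement are consistent: $\Pi^{[b]}(\Mf)=1$ and the total-mass bound $\crochet{\x^{[b]},\mathbf 1}\leq b^\alpha\crochet{\x,\bullet^{-\alpha}}$ (as in the proof of Lemma \ref{L5}) combined with \eqref{condlambda} shows $\Pi^{[b]}$ has finite mean offspring number, so that Lemma \ref{L2} will indeed be applicable to the family $(\Z^{[n,b]})_n$ — though strictly that verification belongs to the next step of the main proof rather than to Lemma \ref{L6} itself.
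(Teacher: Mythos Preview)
Your argument is correct. The first assertion is handled exactly as in the paper, via $\{\Z^{[n,b]}(1)\neq\delta_0\}=\{X_1\leq a_n b\}$ together with \eqref{E:C1} and $nF_1(a_n)\to 1$.

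For the second assertion you take a genuinely different route. The paper proceeds by computing, for each $\ell\geq 1$ and $x_1,\ldots,x_\ell\in[0,b]$, the ``cylinder'' probabilities
\[
\P\bigl(X_1^{[n,b]}\leq x_1,\ldots,X_\ell^{[n,b]}\leq x_\ell,\ X_{\ell+1}^{[n,b]}=\infty\bigr),
\]
rewriting them in terms of the original atoms $X_j$ conditioned on $X_1\leq a_n b$, and then invoking Lemma~\ref{L4} to identify the limit as the corresponding probability under $\Pi^{[b]}$. You instead apply the continuous mapping theorem: from \eqref{E:C2} at level $t=ba_n$ you get convergence of the conditional law of $a_n^{-1}\Z^*(1)$ on $\calM$ to $b(V^{1/\alpha}\mathbf Y)$, and then push forward through $\x\mapsto\delta_0\sqcup\x^{[b]}$. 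The discontinuity of the cut-off at $\{b\}$ is handled via the a.s.-continuity-points version of the continuous mapping theorem, using that $V$ is diffuse and independent of $\mathbf Y$ to get $\P(\exists j:\ V^{1/\alpha}Y_j=1)=0$. Your approach is more conceptual and makes the role of the threshold explicit; the paper's computation of finite-dimensional distributions is more hands-on and identifies the limit law atom by atom. Both ultimately rest on the same continuity-set issue (the paper leaves it implicit when passing from vague convergence in \eqref{E:C2} to convergence of the cylinder probabilities), which you address more carefully.
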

\begin{proof} The  events $\{\Z^{[n,b]}(1) \neq \delta_0\}$ and $\{X_1 \leq a_n b\}$ coincide, and  the first estimate is then plain from \eqref{E:C1}.
Next, write $0<X_1^{[n,b]}\leq X_2^{[n,b]}\ldots \leq \infty$ for the ordered sequence of atoms of $\Z^{[n,b]}(1)$ (discarding as usual  the atom at the origin), and 
 fix some $\ell \geq 1$ and $x_j\in[0,b]$ for $j=1, \ldots, \ell$. We then write
 \begin{align*}
& \P(X_1^{[n,b]}\leq x_1, \ldots, X_{\ell}^{[n,b]}\leq x_{\ell}, X_{\ell+1}^{[n,b]}=\infty) \\
=\, & \P(X_1\leq a_n x_1, \ldots, X_{\ell}\leq a_n x_{\ell}, X_{\ell+1} > a_n b) \\
=\, & \P\left(\frac{X_1}{a_n b}\leq \frac{ x_1}{b}, \ldots, \frac{X_{\ell}}{a_n b}\leq \frac{ x_{\ell}}{b},  \frac{X_{\ell+1}}{a_n b}>1 \mid X_{1}\leq a_n b \right )
 \P(X_{1}\leq a_n b).
 \end{align*}
 Recall on the one hand from \eqref{E:C1}  that $\P(X_{1}\leq a_n b) \sim b^{\alpha}/n$, and on the other hand, by Lemma \ref{L4}, that the first term of the product in the last displayed quantity
 converges as $n\to \infty$ to
 $$\P(b V^{1/\alpha}Y_1 \leq x_1, \ldots , b V^{1/\alpha}Y_{\ell} \leq x_{\ell},  V^{1/\alpha}Y_{\ell+1} >1).$$
 This entails our second claim. 
\end{proof}

Lemma \ref{L6} immediately entails the following version of Theorem \ref{T1} for the trimmed processes.

\begin{corollary}\label{C1} Under the same assumptions and notation as in Theorem \ref{T1}, we have for every $b>0$
$$\lim_{n\to \infty} \left(\Z^{[n,b]}(\lfloor tn \rfloor)\right)_{t\geq 0} = (\S^{[b]}(t))_{t\geq 0}$$
in the sense of  weak convergence on $\calD(\Mf)$.
\end{corollary}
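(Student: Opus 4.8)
The plan is to combine Corollary~\ref{C1} with Lemma~\ref{L5}. Indeed, Lemma~\ref{L6} verifies precisely the hypotheses of Lemma~\ref{L2} for the sequence of trimmed, rescaled branching random walks $\Z^{[n,b]}$, with limiting death rate $r=b^{\alpha}$ and offspring law $\Pi^{[b]}$; hence Lemma~\ref{L2} yields weak convergence of $(\Z^{[n,b]}(\lfloor tn\rfloor))_{t\ge 0}$ on $\calD(\Mf)$ to the Uchiyama branching process with reproduction rate $b^{\alpha}\cdot\Pi^{[b]}$. So the only thing left to check is that this Uchiyama branching process coincides in law with the trimmed branching-stable process $\S^{[b]}$. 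By Lemma~\ref{L5}, $\S^{[b]}$ is itself a Uchiyama branching process with reproduction rate $\Lambda^{[b]}$, so it suffices to identify the two reproduction rates, i.e. to prove the identity of finite measures on $\Mf$
$$b^{\alpha}\cdot \Pi^{[b]} = \Lambda^{[b]}.$$

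First I would match total masses. From Lemma~\ref{L6}, $b^{\alpha}\cdot\Pi^{[b]}$ has total mass $b^{\alpha}$ (since $\Pi^{[b]}$ is a probability measure), while in the discussion preceding Lemma~\ref{L5} it was computed that $\Lambda^{[b]}(\calM)=\alpha^{-1}b^{\alpha}\,\boldsymbol{\lambda}(\calM^1)$. In Theorem~\ref{T1} we take $\boldsymbol{\lambda}=\alpha\cdot\boldsymbol{\rho}$ with $\boldsymbol{\rho}$ a probability measure, so $\boldsymbol{\lambda}(\calM^1)=\alpha$ and $\Lambda^{[b]}(\calM)=b^{\alpha}$; the total masses agree. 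Then I would identify the normalized laws. On the branching-random-walk side, the normalized law $\Pi^{[b]}$ is, by Lemma~\ref{L6}, the law of $\delta_0\sqcup b\,(V^{1/\alpha}\mathbf Y)^{[1]}$, with $V$ uniform on $(0,1)$ independent of $\mathbf Y\sim\boldsymbol{\rho}$. On the branching-stable side, by the construction of $\Lambda^{[b]}$ as the push-forward of $\Lambda^*$ restricted to $\{x_1\le b\}$ under the cut-off $\x\mapsto\x^{[b]}$, followed by adding an atom at $0$, and using the polar decomposition \eqref{defLambda}: an element of $\calM^*$ under $\Lambda^*$ restricted to $\{x_1\le b\}$ is $r\mathbf Y$ with $\mathbf Y\sim\boldsymbol{\rho}$ (after normalizing $\boldsymbol{\lambda}=\alpha\boldsymbol{\rho}$) and $r$ distributed with density proportional to $r^{\alpha-1}\mathbf 1_{[0,b]}(r)$ on $(0,b)$. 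Writing $r=bV^{1/\alpha}$ makes $V$ uniform on $(0,1)$, and then the cut-off $\mathbf 1_{[0,b]}(r\mathbf Y)=b(V^{1/\alpha}\mathbf Y)^{[1]}$ after rescaling; adding the atom at $0$ gives exactly the law $\Pi^{[b]}$. Thus $b^{\alpha}\cdot\Pi^{[b]}=\Lambda^{[b]}$ as claimed, and Corollary~\ref{C1} follows.

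I expect the main (and only real) obstacle to be the bookkeeping in this change-of-variables identification of the two reproduction rates — in particular tracking how the dilation factor $b$ interacts with the cut-off map and the polar coordinates of $\Lambda^*$, and making sure the normalization $\boldsymbol{\lambda}=\alpha\cdot\boldsymbol{\rho}$ is the one that produces a uniform $V$ rather than a spurious constant. Everything else is an immediate citation: Lemma~\ref{L6} supplies exactly the three hypotheses of Lemma~\ref{L2} (note the first hypothesis of Lemma~\ref{L2}, that $\crochet{\Z^{[n,b]}(1),\mathbf 1}\ge 1$ a.s. and has finite expectation, holds trivially because $\Z^{[n,b]}(1)$ always keeps its atom at $0$ and has no more atoms than $\Z(1)$), and Lemma~\ref{L5} identifies $\S^{[b]}$ as a Uchiyama process with the matching rate. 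One should also note in passing that the identification $r=b^{\alpha}$ of the death rates is forced by the total-mass computation above, so no separate argument for $r$ is needed.
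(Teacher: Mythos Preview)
Your proof is correct and follows exactly the paper's route: combine Lemmas~\ref{L2}, \ref{L5} and \ref{L6} together with the identity of reproduction rates $b^{\alpha}\cdot\Pi^{[b]}=\Lambda^{[b]}$ (which the paper records, with an apparent misprint, as $\alpha\cdot\Pi^{[b]}=\Lambda^{[b]}$; your total-mass check confirms $b^{\alpha}$ is the right constant). One slip: your opening sentence says ``combine Corollary~\ref{C1} with Lemma~\ref{L5}'', but you cannot invoke the statement you are proving --- you evidently mean Lemma~\ref{L2} (via Lemma~\ref{L6}) together with Lemma~\ref{L5}.
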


\begin{proof} It suffices to observe first the easy identity $\alpha \cdot \Pi^{[b]}= \Lambda^{[b]}$, and then to combine Lemmas \ref{L2}, \ref{L5} and \ref{L6}.
\end{proof}

We can now complete the proof of Theorem \ref{T1}.
\begin{proof}[Proof of Theorem \ref{T1}] Recall  that if $m$ and $m'$ are two measures with $m\geq m'$,
then the L\'evy-Prokhorov distance between $m$ and $m'$ is bounded from above by the total mass of the positive measure $m-m'$. 
Recalling also the definition of the distance $d_r$ on $\mathcal{M}_{r,f}$, this yields for every fixed $b>0$ the bound
$$d_r(\S(s),\S^{[b]}(s)) \leq \crochet{\S(s)-\S^{[b]}(s),\e^{-r\bullet}} \qquad \hbox{ for all }s\geq 0.$$
Moreover, the map  $s\mapsto (\S(s)-\S^{[b]}(s))$ with values in the space of positive measures, is non-decreasing. Working on the time-interval $[0,t]$ for some fixed $t>0$, we get
$$\sup_{0\leq s \leq t} d_r(\S(s),\S^{[b]}(s)) \leq \crochet{\S(t)-\S^{[b]}(t),\e^{-r\bullet}}.$$
On the other hand, we have plainly $\mathbf 1_{[0,b]}\S(t)\leq \S^{[b]}(t)$, so $\S(t)-\S^{[b]}(t)\leq \mathbf 1_{(b,\infty)}\S(t)$, and 
since $\crochet{\S(t),\e^{-r\bullet}})< \infty$ a.s. (see, e.g., Proposition 3.1(ii) in \cite{BCM}),
we conclude that
\begin{equation}\label{E:P1}\lim_{b\to \infty}\sup_{0\leq s \leq t} d_r(\S(s),\S^{[b]}(s)) = 0 \qquad\text{a.s.}
\end{equation}

From \eqref{E:P1}, Corollary \ref{C1}, and the fact that the Prokhorov distance dominates $d_r$ on $\Mf$,
we can find a sequence $(b_n)$ of positive numbers which grows to $\infty$ sufficiently slowly, such that
\begin{equation}\label{E:P2}\lim_{n\to \infty} \left(\Z^{[n,b_n]}(\lfloor sn \rfloor)\right)_{0\leq s \leq t} = (\S(s))_{0\leq s \leq t}
\end{equation}
in the sense of  weak convergence on the space $\calD([0,t], \mathcal{M}_{r,f})$ of rcll paths from $[0,t]$ to $\mathcal{M}_{r,f}$.

By the same argument as in the first paragraph of the proof, we have also for each $n\geq 1$
$$\sup_{0\leq s \leq t} d_r(a_n^{-1}\Z(\lfloor ns\rfloor),\Z^{[n,b_n]}(\lfloor ns\rfloor)) \leq \crochet{a_n^{-1}\Z(\lfloor nt\rfloor),\mathbf 1_{(b_n,\infty)}\e^{-r\bullet}}.$$
Thanks to the Markov inequality,  the right-hand side is bounded from above by 
$$\e^{-b_n(r-\tilde{r})}\crochet{a_n^{-1}\Z(\lfloor nt\rfloor),\e^{-\tilde{r}\bullet}} = \e^{-b_n(r-\tilde{r})} \crochet{\Z(\lfloor nt\rfloor),\e^{-a_n^{-1} \tilde{r} \bullet}}$$
where $0<\tilde{r}<r$. Recall that $\psi$ denotes the log-Laplace transform of the intensity measure of $\Z(1)$, so the expectation of the right-hand side equals
$$\E\left(\crochet{\Z(\lfloor nt\rfloor),\e^{-a_n^{-1} \tilde{r}\bullet}}\right) = \exp\left(\lfloor nt\rfloor \psi(\tilde{r}/a_n)\right).$$
This quantity remains bounded as $n\to \infty$ by assumption \eqref{E:C3} and the fact that $(a_n)$ is regularly varying. Putting the pieces together, we have shown that
\begin{equation}\label{E:P3}\lim_{n\to \infty}  \E\left( \sup_{0\leq s \leq t} d_r(a_n^{-1}\Z(\lfloor ns\rfloor),\Z^{[n,b_n]}(\lfloor ns\rfloor))\right)=0.
\end{equation}
Applying the argument of Lemma VI. 3.31 on page 352 in \cite{JS} in the setting of metric spaces rather than $\R^d$, we conclude from \eqref{E:P2} and \eqref{E:P3} that
\begin{equation}\label{E:P4}\lim_{n\to \infty} \left(a_n^{-1}\Z(\lfloor sn \rfloor)\right)_{0\leq s \leq t} = (\S(s))_{0\leq s \leq t}
\end{equation}
in the sense of weak convergence on $\calD([0,t], \mathcal{M}_{r,f})$, and the proof is complete.
\end{proof}

 \begin{appendix}
 \section {Appendix: Some spaces of counting measures}
 We list below the notation for several spaces of counting measures which appear in this text.
\begin{itemize}
\item $\calM$ denotes the space of locally finite counting measures on $\R_+$ equipped the topology of vague convergence and its Borel sigma-algebra.
\item $\Mf$ denotes the space of finite counting measures (first on $\R^d$ in Section \ref{S:Uch}, and then on $\R_+$ in the rest of the article), endowed with the L\'evy-Prokhorov distance.
\item $\bar \Mf= \Mf\cup\{\partial\}$ is the one-point compactification of $\Mf$ for the L\'evy-Prokhorov metric.
\item $\calM_{\ell}$, for some $\ell\geq 1$, denotes the space of counting measures in $\Mf$ with total mass at most $\ell$ (first on $\R^d$ in Section \ref{S:Uch}, and then on $\R_+$ in the rest of the article),  endowed with the L\'evy-Prokhorov distance.
\item $\calM^*$ denotes the subspace of non-zero counting measures in $\calM$ with no atom at $0$.
\item $\calM^1$ is the subspace of counting measures in $\calM^*$ with left-most atom located at $1$.
\item $\mathcal M_{r,f}$, for some $r>0$, denotes the subspace of counting measures $\x\in \calM$ with $\crochet{\x, \e^{-r\bullet}}<\infty$.
\end{itemize}

\end{appendix}

\bibliography{StBRW.bib}

\end{document}